\renewcommand{\d}{\delta}
\newcommand{\e}{\varepsilon}
\newcommand{\R}{\mathbb R}
\newcommand{\B}{\mathbb{B}}
\newcommand{\1}{\mathbf 1}
\newcommand{\eqdef}{\stackrel{\mathrm{def}}{=}}
\renewcommand{\le}{\leqslant}
\renewcommand{\ge}{\geqslant}
\renewcommand{\leq}{\leqslant}
\newtheorem{theorem}{Theorem}[section]
\newtheorem{proposition}[theorem]{Proposition}
\newtheorem{lemma}[theorem]{Lemma}
\newtheorem{corollary}[theorem]{Corollary}
\theoremstyle{remark}
\newtheorem{conjecture}[theorem]{Conjecture}
\newtheorem{remark}[theorem]{Remark}
\newtheorem{question}[theorem]{Question}
\theoremstyle{definition}
\newtheorem{definition}[theorem]{Definition}
\renewcommand{\H}{\mathbb H}
\newcommand{\N}{\mathbb N}
\newcommand{\Z}{\mathbb Z}
\renewcommand{\setminus}{\smallsetminus}
\newcommand{\f}{\phi}
\begin{document}

\title[Vertical versus horizontal Poincar\'e inequalities]{Vertical versus horizontal Poincar\'e inequalities on the Heisenberg group}
\thanks{V.~L. was supported in part by ANR grant KInd. A.~N. was supported in part by NSF grant CCF-0832795, BSF grant 2010021, the Packard Foundation and the Simons Foundation. Part of this work was completed while A.~N. was visiting Universit\'e de Paris Est Marne-la-Vall\'ee.}

\author{Vincent Lafforgue}
\address{Laboratoire de Math\'ematiques - Analyse, Probabilit\'es, Mod\'elisation - Orl\'eans (MAPMO)
UMR CNRS 6628, Universit\'e d'Orl\'eans
Rue de Chartres, B.P. 6759 - 45067 Orl\'eans cedex 2} \email{vlafforg@math.jussieu.fr}

\author{Assaf Naor}
\address{Courant Institute of Mathematical Sciences, New York University, 251 Mercer Street, New York NY 10012, USA}
\email{naor@cims.nyu.edu}

\dedicatory{Dedicated to Joram Lindenstrauss}

\maketitle

\begin{abstract}
Let $\H= \left\langle a,b\,|\, a[a,b]=[a,b]a\ \wedge\ b[a,b]=[a,b]b\right\rangle$ be the discrete Heisenberg group, equipped with the left-invariant word metric $d_W(\cdot,\cdot)$ associated to the generating set $\{a,b,a^{-1},b^{-1}\}$. Letting $B_n= \{x\in \H:\ d_W(x,e_\H)\le n\}$ denote the corresponding closed ball of radius $n\in \N$, and writing  $c=[a,b]=aba^{-1}b^{-1}$, we prove that if $(X,\|\cdot\|_X)$ is a Banach space whose modulus of uniform convexity has power type $q\in [2,\infty)$ then there exists $K\in (0,\infty)$ such that every $f:\H\to X$ satisfies
\begin{multline*}
\sum_{k=1}^{n^2}\sum_{x\in B_n}\frac{ \|f(xc^k)-f(x)\|_X^q}{k^{1+q/2}}\\\le
K\sum_{x\in B_{21n}} \Big(\|f(xa)-f(x)\|^q_X+\|f(xb)-f(x)\|^q_X\Big).
\end{multline*}
It follows that for every $n\in \N$ the bi-Lipschitz distortion of every $f:B_n\to X$ is at least a constant multiple of $(\log n)^{1/q}$, an asymptotically optimal estimate as $n\to\infty$.
\end{abstract}

\section{Introduction}

The discrete Heisenberg group, denoted $\H$, is the group generated by two elements $a,b\in \H$, with the relations asserting that the commutator $[a,b]=aba^{-1}b^{-1}$ is in the center of $\H$. Thus $\H$ is given by the presentation $\H= \left\langle a,b\,|\, a[a,b]=[a,b]a\ \wedge\ b[a,b]=[a,b]b\right\rangle$. Write $c= [a,b]$ and let $e_\H$ denote the identity element of $\H$. The left-invariant word metric on $\H$ induced by the symmetric generating set $\{a,b,a^{-1},b^{-1}\}$ is denoted $d_W(\cdot,\cdot)$. For $n\in \N$ let $B_n= \{x\in \H:\ d_W(x,e_\H)\le n\}$ denote the corresponding closed ball of radius $n$.

A Banach space $(X,\|\cdot\|_X)$ is said to be uniformly convex if for every $\e\in (0,1)$ there exists $\d\in (0,1)$ such that every $x,y\in X$ with $\|x\|_X=\|y\|_X=1$ and $\|x-y\|_X\ge \e$ satisfy $\|x+y\|_X\le 2(1-\d)$. The supremum over those $\d\in (0,1)$ for which this holds true is denoted $\d_{(X,\|\cdot\|_X)}(\e)$, and is called the modulus of uniform convexity of $(X,\|\cdot\|_X)$.  An important theorem of Pisier~\cite{Pisier-martingales} asserts that every uniformly convex Banach space $(X,\|\cdot\|_X)$ admits an equivalent norm $\|\cdot\|$ for which there exist $q\in [2,\infty)$ and $\eta\in (0,1)$ such that $\d_{(X,\|\cdot\|)}(\e)\ge (\eta\e)^q$ for all $\e\in (0,1)$.  For concreteness we recall~\cite{Han56} that if $p\in (1,\infty)$ then $\ell_p$ satisfies such an estimate with $q=\max\{p,2\}$.

\begin{theorem}[Vertical versus horizontal Poincar\'e inequality]\label{thm:main}
For every $\eta\in (0,1)$ and $q\in [2,\infty)$ there exists $K=K(\eta,q)\in (0,\infty)$ with the following property. Suppose that $(X,\|\cdot\|_X)$ is a Banach space satisfying $\d_{(X,\|\cdot\|_X)}(\e)\ge (\eta\e)^q$ for every $\e\in (0,1)$. Then for every $n\in \N$ and every $f:\H\to X$ we have
\begin{multline}\label{eq:main}
\sum_{k=1}^{n^2}\sum_{x\in B_n}\frac{ \|f(xc^k)-f(x)\|_X^q}{k^{1+q/2}}\\\le
K\sum_{x\in B_{21n}} \Big(\|f(xa)-f(x)\|^q_X+\|f(xb)-f(x)\|^q_X\Big).
\end{multline}
\end{theorem}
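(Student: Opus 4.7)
My plan is to combine three ingredients: Pisier's equivalence between $q$-uniform convexity and martingale cotype $q$; the fundamental commutator identity $c^{m^2}=[a^m,b^m]$, which realizes the vertical element $c^{m^2}$ as a horizontal loop of length $4m$ in the Cayley graph of $\H$; and an averaging argument that sets up a martingale on which Pisier's inequality acts to produce exactly the desired bound.

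First, by Pisier's theorem, the hypothesis $\d_{(X,\|\cdot\|_X)}(\e)\ge (\eta\e)^q$ yields a constant $K_0=K_0(\eta,q)\in(0,\infty)$ such that every $X$-valued martingale $(M_i)_{i=0}^N$ on any probability space satisfies
\[
\sum_{i=1}^N \E \|M_i-M_{i-1}\|_X^q \le K_0\, \E \|M_N-M_0\|_X^q,
\]
and this will be the only use of the uniform-convexity hypothesis.

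Next, for each scale $m\in\{1,\ldots,n\}$, I would choose a random walk of length $\asymp m^2$ on $\H$, starting from a uniformly random point of $B_n$, constructed so that at some step the walk traces the $4m$-step commutator loop $a^m b^m a^{-m}b^{-m}$ whose net effect in $\H$ is $c^{m^2}$. Applying the Pisier inequality to the Doob filtration of $f$ along this walk---with a time-reversal or coupling chosen to orient the cotype inequality correctly---I expect to obtain, for each $m$, an estimate of the form
\[
\sum_{x\in B_n}\|f(xc^{m^2})-f(x)\|_X^q \le K_1 m^q \sum_{y\in B_{n+O(m)}}\bigl(\|f(ya)-f(y)\|_X^q+\|f(yb)-f(y)\|_X^q\bigr),
\]
the factor $m^q$ reflecting the length $4m$ of the commutator loop. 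Assembling these scale-$m$ inequalities with weight $k^{-1-q/2}$ at $k=m^2$ produces, after a geometric summation over $m\le n$, the full inequality~\eqref{eq:main}. To pass from perfect squares $k=m^2$ to arbitrary $k\le n^2$, I would use the factorization $c^k=c^{\lfloor\sqrt{k}\rfloor^2}\cdot c^{k-\lfloor\sqrt{k}\rfloor^2}$, controlling the residual factor by a shorter commutator.

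The hardest step will be the construction and analysis of the martingale. The naive Doob filtration of $f$ along a random walk yields (via Pisier) an inequality of the ``horizontal $\le$ vertical'' type, which is the reverse of what is needed. To flip the direction I expect to exploit a more elaborate probabilistic object, such as a pair of coupled walks, an averaged time-reversal, or a martingale specifically designed so that its increments sum along commutator arcs (producing vertical motion) while its total variation accumulates horizontal differences at the endpoints. Arranging this so that the ball enlargement on the horizontal side remains within $B_{21n}$ and so that the scale-dependent constants yield precisely the weight $k^{-1-q/2}$ is the technical heart of the argument.
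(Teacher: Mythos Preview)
Your plan has a genuine gap, and you have in fact put your finger on it yourself without resolving it. The single-scale inequality you aim for,
\[
\sum_{x\in B_n}\|f(xc^{k})-f(x)\|_X^q \lesssim k^{q/2} \sum_{y\in B_{n+O(\sqrt{k})}}\bigl(\|f(ya)-f(y)\|_X^q+\|f(yb)-f(y)\|_X^q\bigr),
\]
is actually \emph{elementary}: write $c^k$ as a word of length $\asymp\sqrt{k}$ in $a^{\pm1},b^{\pm1}$, apply the triangle inequality and H\"older. No uniform convexity is used. But then summing over $k\le n^2$ with weight $k^{-1-q/2}$ produces $\sum_{k=1}^{n^2}k^{-1}\asymp\log n$ on the right, a logarithmic loss that cannot be removed by restricting to perfect squares and then filling in (there are $\asymp m$ integers $k$ with $\sqrt{k}\asymp m$, and the per-scale contribution at that scale is $\asymp 1/m$). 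The content of Theorem~\ref{thm:main} is precisely that this $\log n$ is \emph{not} present; uniform convexity must therefore create cancellation \emph{across scales}, not within a fixed scale. Your martingale scheme treats scales independently and so cannot see this cancellation, and as you note, Pisier's cotype inequality applied to the Doob filtration along a path is oriented the wrong way in any case.

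The paper's argument is structurally different. It passes to the real Heisenberg group $\H(\R)$ and runs the Poisson semigroup $P_t$ in the central (vertical) variable. The key analytic step (Proposition~\ref{prop:main}) bounds the $L_q(dt/t)$-norm of the vertical increments $t^{-1/2}\|f(\cdot\,c^t)-f\|_{L_p}$ by the $L_q(dt/t)$-norm of $t\|Q_t*\nabla_\H f\|_{L_p}$, where $Q_t=\partial_t P_t$; this uses only the commutator identity and the semigroup property, and holds in every Banach space. Uniform convexity enters exactly once, through the Mart\'inez--Torrea--Xu Littlewood--Paley inequality $\|\mathfrak{G}_q(\phi)\|_{L_p}\lesssim\|\phi\|_{L_p}$ (equivalent to martingale cotype $q$), applied with $\phi=(\nabla_\H f)^c$. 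This $g$-function bound is exactly the cross-scale square-function estimate that replaces your divergent $\sum 1/k$ by a convergent $\ell_q$-sum. The discrete theorem then follows by a partition-of-unity transfer and Kleiner's local Poincar\'e inequality. If you want to salvage a martingale approach, the missing idea is a mechanism that couples \emph{all} scales simultaneously into a single cotype/square-function estimate; a per-scale random walk cannot do this.
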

The constant $21$ appearing in the range of the summation on the right hand side of~\eqref{eq:main} is an artifact of our proof and is not claimed to be sharp. The important point here is that the summation on the right hand side of~\eqref{eq:main} is over $x\in B_{\lambda n}$ for some universal constant $\lambda\in \N$. One can clearly make the same statement for word metrics induced by other finite symmetric generating sets of $\H$: the choice of generating set will only affect the value of $\lambda$.

An inspection of our proof of Theorem~\ref{thm:main} reveals that $K^{1/q}\lesssim 1/\eta$, but we will not explicitly track the value of such constants  in the ensuing discussion.  Here, and in what follows, we use $A \lesssim B$ and $B \gtrsim A$ to denote the estimate $A \leq CB$ for some absolute constant $C\in (0,\infty)$. If we need $C$ to depend on parameters, we indicate this by subscripts, thus e.g. $A \lesssim_\alpha B$ means that $A \leq C_\alpha B$ for some $C_\alpha\in (0,\infty)$ depending only on $\alpha$. We shall also use the notation $A\asymp B$ for $A\lesssim B\ \wedge\ B\lesssim A$, and similarly $A\asymp_\alpha B$ stands for  $A\lesssim_\alpha B\ \wedge\ B\lesssim_\alpha A$.

We call~\eqref{eq:main} a ``vertical versus horizontal Poincar\'e inequality" for the following reason. The right hand side of~\eqref{eq:main} is the $\ell_q$ norm of the discrete horizontal gradient of $f$: it measures the ``local" variation of $f$ along the edges of the Cayley graph of $\H$ (a.k.a. the horizontal edges in $\H$). The left hand side of~\eqref{eq:main} measures the ``global" variation of $f$ along the center of $\H$ (a.k.a. the vertical direction in $\H$). Theorem~\ref{thm:main} asserts that the global vertical variation of $f$ is always bounded by its local horizontal variation. Thus, if the right hand side of~\eqref{eq:main} is small then $f$ must collapse distances along the center of $\H$.

The (bi-Lipschitz) distortion of a finite metric space $(M,d_M)$ in a Banach space $(X,\|\cdot\|_X)$, denoted $c_X(M,d_M)\in [1,\infty)$, is the infimum over those $D\in [1,\infty)$ for which there exists an embedding $f:M\to X$ that satisfies $d_M(x,y)\le \|f(x)-f(y)\|_X\le D d_M(x,y)$ for all $x,y\in M$. When $X=\ell_p$ for some $p\in [1,\infty)$ it is customary to write $c_{\ell_p}(M,d_M)=c_p(M,d_M)$. The quantity $c_2(M,d_M)$ is known as the Euclidean distortion of $(M,d_M)$. Suppose that $(X,\|\cdot\|_X)$ satisfies the assumption of Theorem~\ref{thm:main} and that $f:\H\to X$ satisfies $d_{W}(x,y)\le \|f(x)-f(y)\|_X\le D d_W(x,y)$ for all $x,y\in B_{22n}$. Since $d_W(c^k,e_\H)\asymp \sqrt{k}$ for every $k\in \N$ and $|B_m|\asymp m^4$ for every $m\in \N$ (see e.g.~\cite{Bla03}), Theorem~\ref{thm:main} applied to $f$ yields the following estimate.
\begin{equation}\label{eq:distortion computation}
n^4\log n\lesssim \sum_{k=1}^{n^2} n^4\frac{k^{q/2}}{k^{1+q/2}}\lesssim_X n^4D^q.
\end{equation}
We therefore obtain the following corollary of Theorem~\ref{thm:main}.
\begin{corollary}[Sharp nonembeddabilty of  balls in $\H$]\label{cor:genX}
Fix $\eta\in (0,1)$ and $q\in [2,\infty)$. Suppose that $(X,\|\cdot\|_X)$ is a Banach space satisfying $\d_{(X,\|\cdot\|_X)}(\e)\ge (\eta\e)^q$ for every $\e\in (0,1)$. Then for every $n\in \N$ we have
$$
c_X\left(B_n,d_W\right)\gtrsim_{\eta} (\log n)^{1/q}.
$$
\end{corollary}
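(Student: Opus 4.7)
The plan is to deduce the corollary by applying Theorem~\ref{thm:main} directly to a near-optimal bi-Lipschitz embedding of an enlarged ball. Given $n\in \N$, I would start with an arbitrary extension to $\H$ of a map $f\colon B_{22n}\to X$ satisfying $d_W(x,y)\le \|f(x)-f(y)\|_X\le D\, d_W(x,y)$ for all $x,y\in B_{22n}$, with $D$ arbitrarily close to $c_X(B_{22n},d_W)$. The extension is immaterial: since $d_W(c^k,e_\H)\asymp \sqrt{k}$, every point $xc^k$ with $x\in B_n$ and $1\le k\le n^2$ lies in $B_{22n}$ for $n$ large enough, as do $xa,xb$ for $x\in B_{21n}$. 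Hence both sums in~\eqref{eq:main} applied with this parameter $n$ depend only on $f|_{B_{22n}}$.

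Next I would apply Theorem~\ref{thm:main} to $f$ and estimate the two sides of~\eqref{eq:main} separately. Using the lower bi-Lipschitz bound together with $d_W(c^k,e_\H)\asymp \sqrt{k}$ and the standard volume growth $|B_m|\asymp m^4$ of the Heisenberg group (see~\cite{Bla03}), the left-hand side is bounded below by a universal constant times
\begin{equation*}
|B_n|\sum_{k=1}^{n^2}\frac{k^{q/2}}{k^{1+q/2}}\asymp n^4\log n.
\end{equation*}
Using the upper bi-Lipschitz bound $\|f(xa)-f(x)\|_X,\|f(xb)-f(x)\|_X\le D$ for $x\in B_{21n}$, the right-hand side is at most a constant depending on $\eta$ and $q$ times $|B_{21n}|D^q\asymp n^4 D^q$.

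Combining these inequalities yields $D^q\gtrsim_\eta \log n$, whence $c_X(B_{22n},d_W)\gtrsim_\eta (\log n)^{1/q}$. By monotonicity of $c_X(B_m,d_W)$ in $m$ (any embedding of a larger ball restricts to one of a smaller ball with no larger distortion), the analogous bound $c_X(B_N,d_W)\gtrsim_\eta (\log N)^{1/q}$ then follows for every $N\in \N$ by setting $N=22n$ and absorbing the harmless logarithmic shift into the implicit constant (small $N$ making the conclusion trivial). There is no genuine obstacle to this argument: the derivation is precisely the elementary computation indicated in~\eqref{eq:distortion computation}, and the only point requiring care is the containment $xc^k\in B_{22n}$, which is why the embedding must be taken on a ball substantially larger than $B_n$ itself.
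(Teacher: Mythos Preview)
Your proposal is correct and follows essentially the same approach as the paper: the paper's argument is precisely the computation~\eqref{eq:distortion computation} preceding the corollary, applying Theorem~\ref{thm:main} to an embedding of $B_{22n}$ and using $d_W(c^k,e_\H)\asymp\sqrt{k}$ and $|B_m|\asymp m^4$ to estimate both sides. Your added remarks about the extension being immaterial and about passing from $B_{22n}$ to general $B_N$ via monotonicity are implicit in the paper's treatment.
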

Corollary~\ref{cor:genX} yields an estimate on $c_X(B_n,d_W)$ in terms of the modulus of uniform convexity of $(X,\|\cdot\|_X)$ which is asymptotically best possible, up to constant factors that are independent of $n$. The following corollary states this explicitly for the case of special interest $X=\ell_p$, though one could equally well state such results for a variety of concrete spaces for which the modulus of uniform convexity has been computed (e.g., the same conclusion holds true with $\ell_p$ replaced by the Schatten class $S_p$, due to the computation of its modulus of uniform convexity in~\cite{Tom74}).

\begin{corollary}\label{cor:L_p} For every integer $n\ge 2$ we have
$$
p\in (1,2]\implies c_p(B_n,d_W)\asymp_p \sqrt{\log n},
$$
and
$$
p\in [2,\infty)\implies c_p(B_n,d_W)\asymp_p (\log n)^{1/p}.
$$
\end{corollary}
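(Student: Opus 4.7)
The two lower bounds are immediate from Corollary \ref{cor:genX} applied to $X = \ell_p$. By Hanner's inequality \cite{Han56}, as recorded in the introduction, the modulus of uniform convexity of $\ell_p$ satisfies $\delta_{\ell_p}(\varepsilon)\gtrsim_p \varepsilon^q$ with $q = \max\{p,2\}$. Substituting $q=2$ when $p\in (1,2]$ and $q=p$ when $p\in [2,\infty)$ yields, via Corollary \ref{cor:genX}, the stated lower bounds $c_p(B_n,d_W)\gtrsim_p \sqrt{\log n}$ and $c_p(B_n,d_W)\gtrsim_p (\log n)^{1/p}$ respectively.

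\textbf{Upper bounds.} For the matching upper bounds, I would proceed as follows. Since $|B_m|\asymp m^4$, the metric space $(B_n,d_W)$ has a universal doubling constant, so a standard Assouad-type construction at each dyadic scale $2^j\in\{1,2,4,\dots,n\}$ produces an embedding into $\ell_p$ with distortion $O((\log n)^{1/p})$ for every $p\in [2,\infty)$: the point is that the $p$-th power in the $\ell_p$-norm absorbs the $\log n$ scales and yields $(\log n)^{1/p}$ distortion rather than $\log n$. For $p\in (1,2]$ I would instead reduce to the case $p=2$ by invoking the classical isometric inclusion $\ell_2\hookrightarrow \ell_p$ given by Gaussian (or $p$-stable) random variables; this converts the Hilbertian embedding of $B_n$ with distortion $O(\sqrt{\log n})$ into an $\ell_p$-embedding with the same distortion.

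\textbf{Main obstacle.} The substantive content is entirely in Theorem \ref{thm:main}; once that and Corollary \ref{cor:genX} are available, the lower half of the present corollary is mechanical. The upper bounds invoke only standard embedding technology for doubling metric spaces together with the isometric embedding $\ell_2\hookrightarrow \ell_p$ for $p\in [1,2]$, and are not specific to $\H$. What the corollary genuinely records is the coincidence of the upper and lower bounds: the bi-Lipschitz distortion of $B_n\hookrightarrow \ell_p$ grows logarithmically in $n$ with exponent $1/\max\{p,2\}$, a rate dictated precisely by the convexity of the target.
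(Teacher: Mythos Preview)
Your proposal is correct and matches the paper's approach: the lower bounds come from Corollary~\ref{cor:genX} with $q=\max\{p,2\}$ via Hanner, and the upper bounds from the Assouad-type scale concatenation (the paper also cites Tessera~\cite{Tes08} and~\cite{LN06} as alternatives). Your explicit reduction for $p\in(1,2]$ via the isometric embedding $\ell_2\hookrightarrow L_p$ is a standard variant that the paper does not spell out but is equally valid, since $B_n$ is finite.
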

The lower bounds on $c_p(B_n,d_W)$ that appear in Corollary~\ref{cor:L_p} are a special case of Corollary~\ref{cor:genX} (in this case $q=\max\{p,2\}$). There are several ways to establish the asymptotically matching upper bounds. Assouad proved in~\cite{Ass83} that there exists $k\in \N$ and $1$-Lipschitz functions $\{\f_j:\H\to \R^k\}_{j=1}^\infty$ satisfying $\|\f_j(x)-\f_j(y)\|_\infty\gtrsim d_W(x,y)$ for every $x,y\in \H$ with $d_W(x,y)\in [2^{j-1},2^j]$: by concatenating $\f_1,\ldots,\f_m$ for $m\asymp \log n$ one sees that the distortion lower bounds of Corollary~\ref{cor:L_p} are indeed asymptotically sharp. An alternative  embedding of $(B_n,d_W)$ into $\ell_p$  with the desired distortion bound was found by Tessera in~\cite{Tes08}. For $p=2$ one can use the explicit closed-form embedding of $\H$ into $\ell_2$ of~\cite{LN06}, where for every $\e\in (0,1)$ a mapping $f_\e:\H\to \ell_2$ is given with $d_W(x,y)^{1-\e}\le \|f_\e(x)-f_\e(y)\|_2\lesssim d_W(x,y)^{1-\e}/\sqrt{\e}$ for every $x,y\in \H$. Setting $\e=1/\log n$ shows that $c_2(B_n,d_W)\lesssim \sqrt{\log n}$.

The above distortion bounds complete a sequence of investigations of the (non)embeddability of the Heisenberg group into ``nice" Banach spaces. A famous observation of Semmes~\cite{Sem96} shows that Pansu's differentiation theorem for Carnot groups~\cite{Pan89} implies that $\H$ does not admit a bi-Lipschitz embedding into $\R^n$ for any $n\in \N$. Alternative proofs of this fact were obtained by Cheeger~\cite{Che99} and Pauls~\cite{Pau01}. The fact that the Pansu-Semmes argument can be extended to certain infinite dimensional targets, yielding in particular the bi-Lipschitz nonembeddability of $\H$ into any uniformly convex Banach space, was obtained independently  by~\cite{CK06} and~\cite{LN06} (via different arguments). In~\cite{CK10} Cheeger and Kleiner proved that $\H$ does not admit a bi-Lipschitz embedding into any $L_1(\mu)$ space, a result that is important for an application to theoretical computer science that will be mentioned in Section~\ref{sec:conj}. Note that not all uniformly convex Banach spaces admit a bi-Lipschitz embedding into an $L_1(\mu)$ space (e.g. $\ell_p$ for $p\in (2,\infty)$), but the Cheeger-Kleiner theorem does yield a new proof of the nonembeddability of $\H$ into some uniformly convex spaces of interest, such as $L_p$ for $p\in (1,2]$, because they are isomorphic to subspaces of $L_1$. In~\cite{CK10-mon} Cheeger and Kleiner discovered a different proof of the nonembeddability of $\H$ into an $L_1(\mu)$ space.

The results quoted above imply that $\lim_{n\to\infty} c_X(B_n,d_W)=\infty$ for the respective target Banach spaces $X$, but they give no information on the rate at which $c_X(B_n,d_W)$ tends to $\infty$ with $n$. In order to obtain such quantitative nonembeddability results one needs to overcome additional (conceptual and technical) issues. The first progress in this direction was due to~\cite{ckn}, where it is shown that $c_1(B_n,d_W)\gtrsim (\log n)^\kappa$ for some universal constant $\kappa>0$. In~\cite{ANT10} it is shown that $c_2(B_n,d_W)\gtrsim \sqrt{\log n}$ and that if $(X,\|\cdot\|_X)$ is a Banach space satisfying $\d_{(X,\|\cdot\|_X)}(\e)\gtrsim_X \e^q$ for every $\e\in (0,1)$ then $c_X(B_n,d_W)\gtrsim_X (\log n/\log\log n)^{1/q}$. Recently, Li~\cite{Li12} obtained a quantitative version of Pansu's differentiation theorem which yields the estimate $c_X(B_n,d_W)\gtrsim_X (\log n)^{\theta_X}$ for some $\theta_X>0$.

As explained above, except for the asymptotic evaluation of the Euclidean distortion $c_2(B_n,d_W)$ that was obtained in~\cite{ANT10}, computing $c_X(B_n,d_W)$ up to constant factors that are independent of $n$ remained open for all non-Hilbertian Banach spaces (the lower bound in~\cite{ANT10} was off by an iterated logarithm factor). Corollary~\ref{cor:genX} resolves this problem for uniformly convex Banach spaces. Other than yielding a complete result for an important class of Banach spaces, the significance of Corollary~\ref{cor:genX} is that its proof is different from the approaches that have been used thus far in the literature.

Specifically, all the above mentioned results first use a limiting argument that shows that it suffices to rule out low-distortion embeddings that belong to a certain ``structured" subclass of all the possible embeddings (e.g. in the case of the Pansu-Semmes proof one argues that it suffices to deal with group homomorphisms). The proofs in~\cite{Pan89,Sem96,Che99,Pau01,CK06,LN06,CK10,CK10-mon,ckn,Li12} all apply this general ``metric differentiation" strategy. The proof in~\cite{ANT10} uses a different but related approach: one first argues that it suffices to rule out embeddings that are $1$-cocycles with respect to an isometric action of $\H$ on $X$; when $X$ is Hilbert space this is done via an argument of Aharoni, Maurey and Mityagin~\cite{AMM85} and Gromov~\cite{CTV07}, and for general uniformly convex $X$ this is done via an argument of~\cite{NP11}. When $X$ is a general uniformly convex Banach space one proceeds in~\cite{ANT10} via an algebraic argument and a quantitative version of a mean ergodic theorem, yielding a bound that is off by an iterated logarithm factor. This argument fails to yield a Poincar\'e inequality such as~\eqref{eq:main}. The proof of the sharp estimate on the Euclidean distortion $c_2(B_n,d_W)$ proceeds in~\cite{ANT10} by proving the Hilbertian case of the Poincar\'e inequality~\eqref{eq:main}: this is done using a theorem of Guichardet~\cite{Gui72} that further reduces the problem to co-boundaries, and since we are interested in an inequality that involves squares of Euclidean distances, one can use the available orthogonality to reduce to the case where the underlying unitary representation is irreducible. Co-boundaries with respect to irreducible representations may then be treated separately via a direct argument.

We do not know how to prove Theorem~\ref{thm:main} using the above strategies: metric differentiation arguments seem to inherently lose (at least) an iterated logarithm factor, and in the only case where a sharp bound was proved the argument heavily uses the Hilbertian structure. Our approach is therefore entirely different: we prove the inequality~\eqref{eq:main} directly via an analytic argument that relies on generalized Littlewood-Paley $g$-function estimates.  While our method does not yield an improved lower bound on $c_1(B_n,d_W)$, it suggests a clean isoperimetric-type inequality that, if true, would yield the (at present still conjectural) sharp estimate $c_1(B_n,d_W)\gtrsim \sqrt{\log n}$. This conjecture, whose investigation is deferred to future work, is discussed in Section~\ref{sec:conj}.

\section{Inequalities on the real Heisenberg group}\label{sec:continuous heisenberg}
We start by setting some (mostly standard) notation and terminology. The Heisenberg group $\H$ can be identified with the following matrix group, equipped with matrix multiplication.
$$
\H=\left\{\left(\begin{array}{cccccc}
1 & x & z \\
0 &  1 & y\\
0 & 0 & 1
\end{array}\right):\  x,y,z\in \Z\right\}.
$$
Under this identification, we have
$$
a= \left(\begin{array}{cccccc}
1 & 1 & 0 \\
0 &  1 & 0\\
0 & 0 & 1
\end{array}\right) \qquad \mathrm{and}\qquad b=\left(\begin{array}{cccccc}
1 & 0 & 0 \\
0 &  1 & 1\\
0 & 0 & 1
\end{array}\right).
$$
Thus
$$
c=aba^{-1}b^{-1}=\left(\begin{array}{cccccc}
1 & 0 & 1 \\
0 &  1 & 0\\
0 & 0 & 1
\end{array}\right).
$$
We will reason below about the real Heisenberg group, denoted $\H(\R)$, which is defined  as
$$
\H(\R)\eqdef \left\{\left(\begin{array}{cccccc}
1 & x & z \\
0 &  1 & y\\
0 & 0 & 1
\end{array}\right):\  x,y,z\in \R\right\}.
$$
We will use the following notation for every $x,y,z\in \R$.
$$
a^x\eqdef \left(\begin{array}{cccccc}
1 & x & 0 \\
0 &  1 & 0\\
0 & 0 & 1
\end{array}\right), \quad  b^y\eqdef \left(\begin{array}{cccccc}
1 & 0 & 0 \\
0 &  1 & y\\
0 & 0 & 1
\end{array}\right),\quad c^z\eqdef \left(\begin{array}{cccccc}
1 & 0 & z \\
0 &  1 & 0\\
0 & 0 & 1
\end{array}\right).
$$
Thus,
$$
c^zb^ya^x= \left(\begin{array}{cccccc}
1 & x & z \\
0 &  1 & y\\
0 & 0 & 1
\end{array}\right).
$$

It is convenient to identify $\H(\R)$ with $\R^3$. In particular, for a Banach space $(X,\|\cdot\|_X)$ and a mapping $f:\H(\R)\to X$, we identify $f(x,y,z)$ with $f(c^zb^ya^x)$. Under this identification, the Lebesgue measure on $\R^3$ is a Haar measure on $\H(\R)$; below we denote this measure on $\H(\R)$ by $\mu$. The spaces $\R,\R^2,\H(\R)$ will always be understood to be endowed with the Lebesgue measure. Thus for $p\in [1,\infty)$ the Lebesgue-Bochner spaces $L_p(\R,X), L_p(\R^2,X), L_p(\H(\R),X)$ are defined unambiguously. For $f\in L_p(\H(\R),X)$ define $f^c:\R\to L_p(\R^2,X)$ by
\begin{equation}\label{eq:def fc}
f^c(z)(x,y)=f(x,y,z).
\end{equation}
Given $\psi\in L_1(\R)$, we define the convolution $\psi*f\in L_p(\H(\R),X)$ by $\psi*f(x,y,z)=(\psi*f^c)(z)(x,y)$, i.e.,
\begin{equation}\label{eq:def convolution}
\psi*f(x,y,z)\eqdef\int_\R \psi(u)f(x,y,z-u)du\in X.
\end{equation}
Equivalently, $\psi*f$ is the usual group convolution  of $f$ with the measure supported on the center of $\H(\R)$ whose density is $\psi$.

Suppose that $f:\H(\R)\to X$ is smooth. The identification of $\H(\R)$ with $\R^3$ gives meaning to the partial derivatives $\frac{\partial f}{\partial x}, \frac{\partial f}{\partial y}$. We define the left-invariant vector fields $\partial_af,\partial_bf:\H(\R)\to X$ by
\begin{equation}\label{eq:def partia a}
\partial_a f(x,y,z)\eqdef \frac{\partial f}{\partial x}(x,y,z),
\end{equation}
and
\begin{equation}\label{eq:def partia b}
\partial_b f(x,y,z)\eqdef \frac{\partial f}{\partial y}(x,y,z)+x\frac{\partial f}{\partial z}(x,y,z).
\end{equation}
The horizontal gradient of $f$ is then defined as
\begin{equation}\label{eq:def horizontal grad}
\nabla_\H f\eqdef(\partial_af,\partial_bf):\H(\R)\to X\oplus X.
\end{equation}
Thus for $p\in [1,\infty)$ and $x,y,z\in \R$ we have
\begin{multline*}
\left\|\nabla_\H f(x,y,z)\right\|_{\ell_p^2(X)}\\=\left(\left\|\frac{\partial f}{\partial x}(x,y,z)\right\|_X^p+\left\|\frac{\partial f}{\partial y}(x,y,z)+x\frac{\partial f}{\partial z}(x,y,z)\right\|_X^p\right)^{1/p}.
\end{multline*}

Theorem~\ref{thm:real} below, the case $p=q$ of which establishes a continuous version of the Poincar\'e inequality of Theorem~\ref{thm:main}, is the main result of this section. Theorem~\ref{thm:main} itself will be shown in Section~\ref{sec:proof main} to follow from Theorem~\ref{thm:real} via a partition of unity argument.

\begin{theorem}[Real vertical versus horizontal Poincar\'e inequality]\label{thm:real}
Suppose that $q\in [2,\infty)$ and $p\in (1,q]$. Let $(X,\|\cdot\|_X)$ be a Banach space satisfying $\d_{(X,\|\cdot\|_X)}(\e)\ge (\eta\e)^q$ for every $\e\in (0,1)$ and some $\eta\in (0,1)$. Then every smooth and compactly supported $f:\H(\R) \to X$ satisfies
\begin{multline}\label{eq:continuous main}
\left(\int_0^\infty \left(\int_{\H(\R)}\|f(hc^t)-f(h)\|_X^pd\mu(h)\right)^{q/p}\frac{dt}{t^{1+q/2}}\right)^{1/q}\\\lesssim_{\eta,p,q} \left(\int_{\H(\R)} \left\|\nabla_\H f(h)\right\|_{\ell_p^2(X)}^p d\mu(h)\right)^{1/p}.
\end{multline}
\end{theorem}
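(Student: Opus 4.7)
The plan is to rewrite the left-hand side of~\eqref{eq:continuous main} as a Banach-space-valued Littlewood--Paley $g$-function in the vertical variable, then to exploit the commutator identity $[\partial_a,\partial_b]=\partial_z$ to convert a vertical half-derivative into a full horizontal derivative, and finally to close the estimate via a generalized vector-valued $g$-function inequality whose availability is precisely what power-type $q$ uniform convexity guarantees.

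First, I would reformulate the left-hand side in semigroup language. Let $(Q_s)_{s>0}$ denote convolution in the $z$-coordinate alone with the one-dimensional Poisson kernel $p_s(u)=s/(\pi(s^2+u^2))$. For any Banach space $Y$ and smooth compactly supported $g:\R\to Y$, the classical Besov equivalence
\[
\int_0^\infty\|g(\cdot+t)-g(\cdot)\|_{L^p(\R,Y)}^q\,\frac{dt}{t^{1+q/2}}\asymp_{p,q}\int_0^\infty\bigl\|\sqrt s\,\partial_sQ_sg\bigr\|_{L^p(\R,Y)}^q\,\frac{ds}{s}
\]
holds; its proof is a one-dimensional Fubini-and-scaling computation with the Poisson kernel and is insensitive to the geometry of $Y$. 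Specializing $Y=L^p(\R^2,X)$ and applying Fubini in $(x,y,z,t)$ identifies the left-hand side of~\eqref{eq:continuous main}, up to a constant depending only on $p,q$, with $\bigl(\int_0^\infty\|\sqrt s\,\partial_sQ_sf\|_{L^p(\H(\R),X)}^q\,ds/s\bigr)^{1/q}$.

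Next, I would exploit the Heisenberg commutator. Because $Q_s$ is a vertical convolution it commutes with both left-invariant fields $\partial_a$ and $\partial_b$, and its generator satisfies $\partial_sQ_s=-\partial_z\tilde Q_s$, where $\tilde Q_s$ is convolution with the conjugate Poisson kernel $\tilde p_s(u)=u/(\pi(s^2+u^2))$, an elementary consequence of the identity $\partial_sp_s=-\partial_z\tilde p_s$. Combining this with $\partial_z=[\partial_a,\partial_b]$ and the fact that $\tilde Q_s$ also commutes with $\partial_a,\partial_b$ yields
\[
\sqrt{s}\,\partial_sQ_sf=-\sqrt{s}\,\partial_a\tilde Q_s(\partial_bf)+\sqrt{s}\,\partial_b\tilde Q_s(\partial_af),
\]
displaying $\sqrt s\,\partial_sQ_sf$ as a linear combination of operators of the form $\sqrt s\,\partial_\alpha\tilde Q_s$ applied to a single component of the horizontal gradient $\nabla_\H f$. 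The full theorem thus reduces to the vector-valued square-function estimate
\[
\left(\int_0^\infty\bigl\|\sqrt s\,\partial_\alpha\tilde Q_sg\bigr\|_{L^p(\H(\R),X)}^q\,\frac{ds}{s}\right)^{1/q}\lesssim_{\eta,p,q}\|g\|_{L^p(\H(\R),X)},\qquad\alpha\in\{a,b\},
\]
applied in turn to $g=\partial_bf$ and $g=\partial_af$. In the Hilbertian case $p=q=2$, Plancherel in $z$ reduces this to the operator inequality $(-\partial_z^2)^{1/2}\le L$, where $L=-\partial_a^2-\partial_b^2$ is the horizontal sub-Laplacian, which holds because on each vertical-frequency fibre $\lambda\in\R$, Stone--von Neumann realizes $L$ as a shifted harmonic oscillator with spectral gap $|\lambda|$.

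The main obstacle is this final vector-valued $g$-function estimate. The family $\{\sqrt s\,\partial_\alpha\tilde Q_s\}_{s>0}$ is \emph{not} the canonical square function of a single symmetric diffusion semigroup, so an abstract Banach-valued Littlewood--Paley theorem cannot be invoked as a black box; moreover $\tilde Q_s$ encodes the vertical Hilbert transform, which fails to be $L^p(\R,X)$-bounded under mere power-type $q$ uniform convexity (a condition strictly weaker than UMD). The crux of the argument is therefore to produce the required square-function bound \emph{without} invoking either the UMD property of $X$ or the functional calculus of a self-adjoint semigroup, by combining a careful kernel analysis that exploits the parabolic dilations $(x,y,z)\mapsto(rx,ry,r^2z)$ of $\H(\R)$ with a generalized Banach-valued Littlewood--Paley machinery in the spirit of Pisier, where the power-type $q$ uniform convexity of $X$ enters quantitatively to control an auxiliary vector-valued $g$-function.
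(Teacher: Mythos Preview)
Your opening step is fine and matches the paper: the inequality of Lemma~2.6 there (your ``Besov equivalence'', in the direction that is needed) reduces the left-hand side of~\eqref{eq:continuous main} to
\[
\left(\int_0^\infty s^{\frac{q}{2}-1}\,\bigl\|Q_s*f\bigr\|_{L_p(\H(\R),X)}^q\,ds\right)^{1/q},
\]
where $Q_s=\partial_sP_s$, and this step is indeed insensitive to the geometry of $X$.

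The genuine gap is in your next move. After writing $Q_s*f=-\partial_z(\tilde Q_sf)$ and using $\partial_z=[\partial_a,\partial_b]$, you split the commutator and reduce to the square-function bound
\[
\left(\int_0^\infty\bigl\|\sqrt s\,\partial_\alpha\tilde Q_sg\bigr\|_{L^p(\H(\R),X)}^q\,\frac{ds}{s}\right)^{1/q}\lesssim\|g\|_{L^p(\H(\R),X)},\qquad \alpha\in\{a,b\}.
\]
This inequality is \emph{false}, already for $X=\R$ and $p=q=2$. Taking the Fourier transform in $z$ (so that $\tilde Q_s$ has multiplier $-i\,\mathrm{sgn}(\zeta)e^{-s|\zeta|}$ and $\partial_a=\partial_x$ survives), the left-hand side equals $\int_{\R^3}\frac{|\partial_x\hat g(x,y,\zeta)|^2}{2|\zeta|}\,dx\,dy\,d\zeta$, so you are asserting the operator inequality $-\partial_a^2\lesssim|\partial_z|$ on $\H(\R)$. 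In the Stone--von Neumann picture at central frequency $\lambda$ this would say $P^2\lesssim|\lambda|$ for the momentum operator $P$, which is absurd; equivalently, test against $g(x,y,z)=\phi(x/\e)\psi(y)\rho(z)$ with $\hat\rho$ supported near a fixed small $\zeta_0$ and let $\e\to 0$. The point is that the two terms $\partial_a\tilde Q_s\partial_bf$ and $\partial_b\tilde Q_s\partial_af$ carry a crucial cancellation encoded in the commutator; by estimating them separately you have thrown that cancellation away, and no amount of uniform convexity (or UMD) will rescue the resulting termwise bound. Your closing paragraph, where you describe the obstacle but do not resolve it, is therefore not a missing technical detail but the place where the scheme actually breaks.

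The paper avoids this trap by using the commutator at the \emph{group} level rather than infinitesimally. Since $c^s=a^{\sqrt s}b^{\sqrt s}a^{-\sqrt s}b^{-\sqrt s}$, there is a horizontal path of length $4\sqrt s$ from $e_\H$ to $c^s$. Integrating $Q_t*\nabla_\H f$ along left translates of this path and averaging against $P_t$ yields directly
\[
\|Q_t*f-Q_{2t}*f\|_{L_p(\H(\R),X)}\lesssim \sqrt t\,\|Q_t*\nabla_\H f\|_{L_p(\H(\R),\ell_p^2(X))}
\]
(Lemma~2.4), with only a \emph{single} horizontal derivative appearing on the right. Telescoping over dyadic scales converts this into
\[
\int_0^\infty s^{\frac{q}{2}-1}\|Q_s*f\|_{L_p}^q\,ds\lesssim\int_0^\infty s^{q-1}\|Q_s*\nabla_\H f\|_{L_p}^q\,ds,
\]
and the latter is precisely the one-dimensional Martinez--Torrea--Xu $g$-function of $(\nabla_\H f)^c:\R\to L_p(\R^2,\ell_p^2(X))$, which is bounded by $\|\nabla_\H f\|_{L_p}$ exactly under the power-type $q$ uniform convexity hypothesis. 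No conjugate Poisson kernel, no vertical Hilbert transform, and hence no UMD assumption, ever enter.
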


In preparation for the proof of Theorem~\ref{thm:real}, we first prove some preparatory lemmas. The Poisson kernel on $\R$ is defined as usual for every $t,x\in \R$ by
$$
P_t(x)\eqdef \frac{t}{\pi(t^2+x^2)}.
$$
We also write
\begin{equation}\label{eq:defQ}
Q_t(x)\eqdef \frac{\partial}{\partial t} P_t(x)=\frac{x^2-t^2}{\pi(t^2+x^2)^2},
\end{equation}
and
\begin{equation}\label{eq:defR}
 R_t(x)\eqdef \frac{\partial}{\partial x} P_t(x)=\frac{-2tx}{\pi(t^2+x^2)^2}.
\end{equation}
We record for future use that for every $t\in (0,\infty)$,
\begin{equation}\label{eq:P_t 1/2 moment}
\int_0^\infty \sqrt{x}P_t(x)dx= \frac{\sqrt{t}}{\pi}\int_0^\infty \frac{\sqrt{y}}{1+y^2}dy=\sqrt{\frac{t}{2}},
\end{equation}
\begin{equation}\label{eq:norm Q}
\|Q_t\|_{L_1(\R)}=\frac{1}{\pi t} \int_\R \frac{|1-y^2|}{(1+y^2)^2}dy =\frac{2}{\pi t},
\end{equation}
and
\begin{equation}\label{eq:norm R}
\|R_t\|_{L_1(\R)}=\frac{2}{\pi t} \int_\R \frac{|y|}{(1+y^2)^2}dy =\frac{2}{\pi t}.
\end{equation}
(The precise constants appearing in~\eqref{eq:P_t 1/2 moment}, \eqref{eq:norm Q} and \eqref{eq:norm R} are not needed in the ensuing discussion: only the stated dependence on $t$ up to constant factors will be used.)

The main step of the proof of Theorem~\ref{thm:real} is Proposition~\ref{prop:main} below.
\begin{proposition}\label{prop:main}
Fix $p,q\in [1,\infty)$ and let $(X,\|\cdot\|_X)$ be a Banach space. Every smooth and compactly supported $f:\H(\R)\to X$ satisfies
\begin{multline*}
\left(\int_0^\infty \left(\int_{\H(\R)}\|f(hc^t)-f(h)\|_X^pd\mu(h)\right)^{q/p}\frac{dt}{t^{1+q/2}}\right)^{1/q}\\\lesssim
\left(\int_0^\infty t^{q-1}\left\|Q_t*\nabla_\H f\right\|_{L_p(\H(\R),\ell_p^2(X))}^qdt\right)^{1/q}.
\end{multline*}
\end{proposition}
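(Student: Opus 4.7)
The plan is to combine the Heisenberg commutator identity with a Littlewood-Paley decomposition via the Poisson semigroup in the central direction. First I would verify the matrix identity $c^t = a^{\sqrt{t}}\, b^{\sqrt{t}}\, a^{-\sqrt{t}}\, b^{-\sqrt{t}}$ in $\H(\R)$ by a direct $3\times 3$ computation using the parametrizations of $a^x, b^y, c^z$ from the beginning of Section~\ref{sec:continuous heisenberg}. This identity expresses a central displacement of length $t$ as a commutator of horizontal displacements of length $\sqrt t$ each, reducing the problem of bounding a vertical difference to bounding horizontal ones.

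From this identity I would derive the elementary commutator bound: for any smooth $g : \H(\R) \to X$,
\begin{equation*}
\|g(\cdot c^t) - g(\cdot)\|_{L_p(\mu, X)} \le 4\sqrt{t}\,\|\nabla_\H g\|_{L_p(\mu, \ell_p^2(X))},
\end{equation*}
by telescoping $g(hc^t) - g(h) = \sum_{i=1}^4 [g(hg_i) - g(hg_{i-1})]$ along the partial products $g_0 = e, g_1 = a^{\sqrt t}, g_2 = a^{\sqrt t}b^{\sqrt t}, g_3 = a^{\sqrt t}b^{\sqrt t}a^{-\sqrt t}, g_4 = c^t$. Each consecutive difference equals $\int_0^{\sqrt t}\partial g(h g_{i-1} \gamma_i(u))\,du$ for some $\partial \in \{\partial_a, \partial_b\}$ and a horizontal segment $\gamma_i$, by the fundamental theorem of calculus; the bound then follows from right-invariance of $\mu$ under each generator (unimodularity of $\H(\R)$) and Minkowski's inequality. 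Since $\partial_a = \partial_x$ and $\partial_b = \partial_y + x\partial_z$ commute with convolution in $z$ (the coordinate $x$ is unchanged), one has $\nabla_\H(\psi * f) = \psi * \nabla_\H f$ for any $\psi \in L_1(\R)$, so applying the commutator bound to $g = Q_s * f$ yields, uniformly in $s > 0$,
\begin{equation*}
\|(Q_s * f)(\cdot c^t) - (Q_s * f)(\cdot)\|_{L_p} \le 4\sqrt{t}\,\|Q_s * \nabla_\H f\|_{L_p(\ell_p^2(X))}.
\end{equation*}

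Substituting the Poisson reproducing formula $f = -\int_0^\infty Q_s * f\,ds$ (from $P_0 = \delta_0$, $P_\infty = 0$, and $\partial_s P_s = Q_s$) into $f(hc^t) - f(h)$ yields
\begin{equation*}
f(hc^t) - f(h) = -\int_0^\infty \bigl[(Q_s * f)(hc^t) - (Q_s * f)(h)\bigr]\,ds.
\end{equation*}
The final step converts this into the proposition by matching the LHS weight $dt/t^{1+q/2}$ with the RHS weight $t^{q-1}\,dt$ via the Heisenberg scaling $s = \sqrt t$. Applying Minkowski in $s$ naively gives only the divergent bound $\|f(\cdot c^t) - f\|_{L_p} \lesssim \sqrt t \int_0^\infty \|Q_s * \nabla_\H f\|_{L_p}\,ds$, so the argument splits the $s$-integral at the Heisenberg scale $s = \sqrt t$: the horizontal commutator bound is used at small scales $s \le \sqrt t$, while at large scales $s \ge \sqrt t$ one uses the vertical-smoothness bound $\|(Q_s * f)(\cdot c^t) - (Q_s * f)(\cdot)\|_{L_p} \le t\,\|\partial_z(Q_s * f)\|_{L_p}$ together with the factorization $\partial_z Q_s = R_{s/2} * Q_{s/2}$ (from $Q_s = P_{s/2} * Q_{s/2}$ and $\partial_z P_{s/2} = R_{s/2}$) and the estimate $\|R_{s/2}\|_{L_1} \lesssim 1/s$. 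Assembling the two regimes via a Hardy-type inequality in $s$ gives the proposition. The main obstacle I foresee is this final assembly: the threshold $s = \sqrt t$ is forced by the commutator identity, and the Hardy/Minkowski bookkeeping must be performed carefully to arrive at the homogeneous weight $s^{q-1}$ on the RHS without losing logarithmic factors.
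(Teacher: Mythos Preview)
Your ingredients are right (the commutator identity, the reproducing formula $f=-\int_0^\infty Q_s*f\,ds$, the factorization $\partial_x Q_s=R_{s/2}*Q_{s/2}$), but the assembly you propose does not close, for exactly the reason you flag at the end. On the small-scale piece $s\le\sqrt t$, your commutator bound gives $4\sqrt t\,\|Q_s*\nabla_\H f\|_{L_p}$ with the factor $\sqrt t$ \emph{independent of $s$}; after integrating in $s$ and substituting $u=\sqrt t$ you are asking for
\[
\int_0^\infty\Bigl(\int_0^u\phi(s)\,ds\Bigr)^q\frac{du}{u}\ \lesssim\ \int_0^\infty s^{q-1}\phi(s)^q\,ds,\qquad \phi(s)=\|Q_s*\nabla_\H f\|_{L_p}.
\]
This is the endpoint case of weighted Hardy and it fails; in fact for smooth compactly supported $f$ one has $\phi(s)\asymp 1/s$ for large $s$, so the left side is already infinite. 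On the large-scale piece $s\ge\sqrt t$, your bound $\lesssim (t/s)\|Q_{s/2}*f\|_{L_p}$ lands you (after Hardy) on $\int_0^\infty s^{q-1}\|Q_{s}*f\|_{L_p}^q\,ds$, which involves $f$ rather than $\nabla_\H f$ and is not the right-hand side of the proposition.

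The missing idea is to apply the commutator estimate at the \emph{Poisson scale} rather than at the fixed scale $\sqrt t$. The paper writes
\[
Q_t*f-Q_{2t}*f=\int_\R P_t(u)\bigl(Q_t*f(h)-Q_t*f(hc^{-u})\bigr)\,du,
\]
and your own commutator path, now with displacement $c^{-u}$, gives cost $4\sqrt{|u|}\,\|Q_t*\nabla_\H f\|_{L_p}$; averaging $\sqrt{|u|}$ against $P_t$ produces exactly $\sqrt t$ (Lemma~\ref{lem:increment}). A dyadic telescope $Q_t*f=\sum_{m\ge 1}(Q_{2^{m-1}t}*f-Q_{2^m t}*f)$ then turns $\bigl(\int_0^\infty t^{q/2-1}\|Q_t*f\|^q\,dt\bigr)^{1/q}$ into the desired right-hand side as a convergent geometric series (Lemma~\ref{lem:integrated increment}), bypassing the borderline Hardy entirely. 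Your $R_{s/2}*Q_{s/2}$ trick is used in the paper too, but in the complementary step (Lemma~\ref{lem:enter g function}): bounding the left-hand side by the intermediate quantity $\int_0^\infty t^{q/2-1}\|Q_t*f\|^q\,dt$.
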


\begin{remark}
Continuing with the notation of Proposition~\ref{prop:main}, for every $t\in (0,\infty)$ denote $\tau_tf:\H(\R)\to X$ by $\tau_tf(h)=f(hc^t)$. Write also $S_t=\frac{\pi}{2}tQ_t$. By~\eqref{eq:norm Q} this yields the normalization $\|S_t\|_{L_1(\R)}=1$. The assertion of Proposition~\ref{prop:main} can then be rewritten as
\begin{equation*}
\left\|\left\|\frac{\tau_tf-f}{\sqrt{t}}\right\|_{L_p(\H(\R),X)}\right\|_{L_q\left(\R_+,\frac{dt}{t}\right)}\lesssim \left\|\left\|S_t*\nabla_\H f\right\|_{L_p(\H(\R),\ell_p^2(X))}\right\|_{L_q\left(\R_+,\frac{dt}{t}\right)}.
\end{equation*}
Introducing $S_t$ in this way helps explain the meaning of the powers of $t$ that appear in the statement of Proposition~\ref{prop:main}, but it makes the ensuing proofs more cumbersome. We will therefore continue working with $Q_t$ rather than $S_t$ in what follows.
\end{remark}

Assuming the validity of Proposition~\ref{prop:main} for the moment, we now proceed to show how it implies Theorem~\ref{thm:real}.
\begin{proof}[Proof of Theorem~\ref{thm:real}]
Given a Banach space space $(\B,\|\cdot\|_\B)$, for every function $\phi\in L_p(\R,\B)$ its generalized Hardy-Littlewood $g$-function $\mathfrak{G}_q(\phi):\R\to [0,\infty]$ is defined as follows.
\begin{equation}\label{eq:def g function}
\mathfrak{G}_q(\phi)(x)\eqdef \left(\int_0^\infty t^{q-1}\left\|Q_t*\phi(x)\right\|_\B^qdt\right)^{1/q}.
\end{equation}
A beautiful theorem of Martinez, Torrea and Xu~\cite[Thm.~2.1]{MTX06} asserts that if $\d_{(\B,\|\cdot\|_\B)}(\e)\ge (\xi\e)^q$ for every $\e\in (0,1)$ and some constant $\xi\in (0,1)$ then for every $p\in (1,\infty)$,
\begin{equation}\label{eq:MTX}
\phi\in L_p(\R,\B)\implies\left\|\mathfrak{G}_q(\phi)\right\|_{L_p(\R,\B)}\lesssim_{\xi,p,q} \|\phi\|_{L_p(\R,\B)}.
\end{equation}
In fact, it is shown in~\cite{MTX06} that the validity of~\eqref{eq:MTX} for some $p\in (1,\infty)$ (equivalently for all $p\in (1,\infty)$) is equivalent to $\B$ admitting an equivalent norm whose modulus of uniform convexity is at least a constant multiple of $\e^q$. Note that Theorem~2.1 of~\cite{MTX06} asserts~\eqref{eq:MTX} under the assumption that $(\B,\|\cdot\|_\B)$ has martingale cotype $q$, but this follows from our assumption on the modulus of uniform convexity of $(\B,\|\cdot\|_\B)$ by important work of Pisier~\cite{Pisier-martingales}.

We shall apply~\eqref{eq:MTX} to $\B=L_p(\R^2,\ell_p^2(X))$. Since we are assuming in Theorem~\ref{thm:real} that $p\in (1,q]$, by a result of Figiel~\cite{Fig76} there exists $\xi=\xi(\eta,p,q)\in (0,1)$ such that  $\d_{(\B,\|\cdot\|_\B)}(\e)\ge (\xi\e)^q$ for every $\e\in (0,1)$ (see Corollary~6.4 in~\cite{MN-towards} for an explicit dependence of $\xi$ on $\eta,p,q$). Recalling~\eqref{eq:def fc}, consider the function $\phi=(\nabla_\H f)^c:\R\to \B$. Then,
\begin{align}
&\nonumber\left(\int_0^\infty t^{q-1}\left\|Q_t*\nabla_\H f\right\|_{L_p(\H(\R),\ell_p^2(X))}^qdt\right)^{1/q}\\
\label{eq:use definitions B phi}
&=\left(\int_0^\infty \left(\int_\R \left(t\left\|Q_t*\phi(z)\right\|_{\B}\right)^pdz\right)^{q/p} \frac{dt}{t}\right)^{1/q}\\
\label{eq:use ple q}&\le \left(\int_\R\left(\int_0^\infty \left(t\left\|Q_t*\phi(z)\right\|_{\B}\right)^q \frac{dt}{t}\right)^{p/q}dz\right)^{1/p}\\
&=\label{eq:use MTX} \left\|\mathfrak{G}_q(\phi)\right\|_{L_p(\R,\B)}\lesssim_{\eta,p,q}  \|\phi\|_{L_p(\R,\B)},
\end{align}
where in~\eqref{eq:use definitions B phi} we used the definitions of $\B$ and $\phi$, in~\eqref{eq:use ple q} we used the fact that $p\le q$, and in~\eqref{eq:use MTX} we used~\eqref{eq:def g function} and~\eqref{eq:MTX}. Noting that by the definition of $\phi$ we have $\|\phi\|_{L_p(\R,\B)}=\|\nabla_\H f\|_{L_p(\H(\R),\ell_p^2(X))}$, the desired inequality~\eqref{eq:continuous main} follows from Proposition~\ref{prop:main}.
\end{proof}

We now pass to the proof of Proposition~\ref{prop:main}.
\begin{lemma}\label{lem:increment} Suppose that $p\in [1,\infty)$ and $t\in (0,\infty)$. Then for every Banach space $(X,\|\cdot\|_X)$ and every smooth and compactly supported $f:\H(\R)\to X$ we have
\begin{equation}\label{eq:increment}
\left\|Q_t*f-Q_{2t}*f \right\|_{L_p(\H(\R),X)}\lesssim \sqrt{t}\cdot \left\|Q_t*\nabla_\H f\right\|_{L_p(\H(\R),\ell_p^2(X))}.
\end{equation}
\end{lemma}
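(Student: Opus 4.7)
The plan is to reduce~\eqref{eq:increment} to a ``vertical-versus-horizontal displacement'' bound via the Poisson semigroup. Differentiating the semigroup identity $P_s*P_t=P_{s+t}$ in $t$ yields $P_s*Q_t=Q_{s+t}$, hence $Q_{2t}=P_t*Q_t$. Writing $F_t\eqdef Q_t*f$, this rewrites
\begin{equation*}
(Q_{2t}*f-Q_t*f)(h)=(P_t*F_t-F_t)(h)=\int_\R P_t(u)\bigl[F_t(hc^{-u})-F_t(h)\bigr]\,du,
\end{equation*}
so the problem reduces to bounding vertical increments of $F_t$ in $L_p$. Moreover, since $\partial_a=\partial_x$ and $\partial_b=\partial_y+x\partial_z$ both commute with convolution in the central variable $z$, we also have $\nabla_\H F_t=Q_t*\nabla_\H f$, which is precisely what appears on the right-hand side of~\eqref{eq:increment}.

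The main step is to prove the following displacement inequality for every smooth $G:\H(\R)\to X$ and every $u\in\R$:
\begin{equation*}
\|G(\cdot\,c^u)-G(\cdot)\|_{L_p(\H(\R),X)}\lesssim \sqrt{|u|}\,\|\nabla_\H G\|_{L_p(\H(\R),\ell_p^2(X))}.
\end{equation*}
The key algebraic input is the commutator identity $a^sb^sa^{-s}b^{-s}=c^{s^2}$, verified by direct matrix multiplication. Taking $s=\sqrt{|u|}$ (with sign adjustments when $u<0$) expresses $c^u$ as a horizontal loop made of four segments of length $\sqrt{|u|}$. Telescoping $G(hc^u)-G(h)$ along this loop writes the increment as a sum of four integrals of the form $\pm\int_0^{\sqrt{|u|}}\partial_\alpha G(hg_i(r))\,dr$ with $\alpha\in\{a,b\}$ and suitable translates $g_i(r)\in\H(\R)$. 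Minkowski's integral inequality together with left-invariance of Haar measure (which gives $\|\partial_\alpha G(\cdot\,g_i(r))\|_{L_p}=\|\partial_\alpha G\|_{L_p}$ for every $r$) then yields the displayed bound.

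Combining the two steps, Minkowski's integral inequality gives
\begin{equation*}
\|Q_{2t}*f-Q_t*f\|_{L_p(X)}\lesssim \|\nabla_\H F_t\|_{L_p(\ell_p^2(X))}\int_\R P_t(u)\sqrt{|u|}\,du,
\end{equation*}
and the remaining integral is a constant multiple of $\sqrt{t}$ by~\eqref{eq:P_t 1/2 moment}. Substituting $\nabla_\H F_t=Q_t*\nabla_\H f$ then yields~\eqref{eq:increment}. The only genuinely nontrivial ingredient is the commutator identity $a^sb^sa^{-s}b^{-s}=c^{s^2}$, which is the combinatorial source of the $\sqrt{t}$-scaling; the remaining ingredients (semigroup identity $Q_{2t}=P_t*Q_t$, commutation of $\partial_a,\partial_b$ with vertical convolution, Minkowski, and the $1/2$-moment of $P_t$) are routine.
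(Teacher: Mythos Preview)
Your proof is correct and follows essentially the same route as the paper: both use $Q_{2t}=P_t*Q_t$ to reduce to vertical increments of $Q_t*f$, then control those by traversing the commutator loop $a^sb^sa^{-s}b^{-s}=c^{s^2}$ with $s=\sqrt{|u|}$, applying Minkowski and the invariance of Haar measure, and finishing with the $1/2$-moment of $P_t$. The only quibble is terminological: the identity $\|\partial_\alpha G(\cdot\,g)\|_{L_p}=\|\partial_\alpha G\|_{L_p}$ uses \emph{right}-invariance of $\mu$ (the map is $h\mapsto hg$), not left-invariance---though of course $\H(\R)$ is unimodular, so this is harmless.
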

\begin{proof}
The semigroup property of $P_t$ implies that $P_{2t}=P_t*P_t$. By differentiating this identity with respect to $t$ we have $Q_{2t}=P_t*Q_t$. Consequently, for every $h\in \H(\R)$ we have
\begin{multline}\label{eq:use semigroup of root}
Q_t*f(h)-Q_{2t}*f(h)=Q_t*f(h)-P_t*Q_t*f(h)\\=\int_\R P_t(u)\left(Q_t*f(h)-Q_t*f(hc^{-u})\right)du,
\end{multline}
where we used the fact that $\int_\R P_t(u)du=1$.

For every $s\in [0,\infty)$ let $\gamma_s:[0,4\sqrt{s}]\to \H(\R)$ be the commutator path joining $e_\H$ and $c^{s}$, i.e.,

$$
\gamma_s(\theta)\eqdef\left\{\begin{array}{ll}
a^{\theta}& \mathrm{if}\ 0\le \theta\le \sqrt{s},\\
a^{\sqrt{s}} b^{\theta-\sqrt{s}} & \mathrm{if}\ \sqrt{s}\le \theta \le 2\sqrt{s},\\
a^{\sqrt{s}} b^{\sqrt{s}}a^{-\theta+2\sqrt{s}} & \mathrm{if}\ 2\sqrt{s}\le \theta\le 3\sqrt{s},\\
a^{\sqrt{s}} b^{\sqrt{s}}a^{-\sqrt{s}} b^{-\theta+3\sqrt{s}} & \mathrm{if}\ 3\sqrt{s}\le \theta\le 4\sqrt{s}.
\end{array}\right.
$$
For every $u\in [0,\infty)$ and $h\in \H(\R)$ we then have
\begin{multline}\label{eq:derive along commutator positive}
Q_t*f(h)-Q_t*f(hc^{-u})=Q_t*f(hc^{-u}\gamma_u(4\sqrt{u}))-Q_t*f(hc^{-u})\\
=\int_0^{4\sqrt{u}} \frac{d}{d\theta}Q_t*f(hc^{-u}\gamma_u(\theta)) d\theta.
\end{multline}
Recalling~\eqref{eq:def partia a} and~\eqref{eq:def partia b}, observe that for every $\theta\in [0,4\sqrt{u}]$ we have
\begin{multline}\label{eq;vector fields appear 1}
 \frac{d}{d\theta}Q_t*f(hc^{-u}\gamma_u(\theta))\in \left\{\partial_a Q_t*f(hc^{-u}\gamma_u(\theta)), \partial_b Q_t*f(hc^{-u}\gamma_u(\theta))\right\}\\
 =\left\{Q_t*\partial_a f(hc^{-u}\gamma_u(\theta)), Q_t*\partial_b f(hc^{-u}\gamma_u(\theta))\right\},
\end{multline}
where we also used the fact that convolution with $Q_t$ commutes with $\partial_a$ and $\partial_b$. Recalling~\eqref{eq:def horizontal grad}, we deduce from~\eqref{eq:derive along commutator positive} and~\eqref{eq;vector fields appear 1} that
\begin{align}
&\nonumber\left(\int_{\H(\R)}\left\|\int_0^\infty P_t(u)\left(Q_t*f(h)-Q_t*f(hc^{-u})\right)du\right\|_X^pd\mu(h)\right)^{1/p}\\
\nonumber&\le \int_0^\infty \int_0^{4\sqrt{u}} P_t(u)\left(\int_{\H(\R)}\left\|Q_t*\nabla_\H f(hc^{-u}\gamma_u(\theta))\right\|_{\ell_p^2(X)}^pd\mu(h)\right)^{1/p}d\theta du\\ \label{eq:right invariance}
&=\left(\int_0^\infty 4\sqrt{u}P_t(u)du\right)\|Q_t*\nabla_\H f\|_{L_p(\H(\R),\ell_p^2(X))}\\
&=\label{eq:use 1/2 integral}2\sqrt{2t}\cdot \|Q_t*\nabla_\H f\|_{L_p(\H(\R),\ell_p^2(X))},
\end{align}
where in~\eqref{eq:right invariance} we used the right-invariance of the Haar measure $\mu$ on $\H(\R)$ and in~\eqref{eq:use 1/2 integral} we used~\eqref{eq:P_t 1/2 moment}.

The analogue of~\eqref{eq:derive along commutator positive} for $u\in (-\infty,0]$ is the identity
\begin{equation*}
Q_t*f(h)-Q_t*f(hc^{-u})=
-\int_0^{4\sqrt{|u|}} \frac{d}{d\theta}Q_t*f(h\gamma_{-u}(\theta)) d\theta.
\end{equation*}
This, combined with the above reasoning yields the estimate
\begin{multline}\label{eq:negative half}
\left(\int_{\H(\R)}\left\|\int_{-\infty}^0 P_t(u)\left(Q_t*f(h)-Q_t*f(hc^{-u})\right)du\right\|_X^pd\mu(h)\right)^{1/p}\\
\le 2\sqrt{2t}\cdot \|Q_t*\nabla_\H f\|_{L_p(\H(\R),\ell_p^2(X))}.
\end{multline}
\eqref{eq:use 1/2 integral} and~\eqref{eq:negative half} combined with~\eqref{eq:use semigroup of root} yields the desired inequality~\eqref{eq:increment}.
\end{proof}

\begin{lemma}\label{lem:integrated increment}
Fix $p,q\in [1,\infty)$. For every Banach space $(X,\|\cdot\|_X)$, every smooth and compactly supported $f:\H(\R)\to X$ satisfies
\begin{multline*}
\left(\int_0^\infty t^{\frac{q}{2}-1}\left\|Q_t*f\right\|_{L_p(\H(\R),X)}^qdt\right)^{1/q}\\\lesssim \left(\int_0^\infty t^{q-1}\left\|Q_t*\nabla_\H f\right\|_{L_p(\H(\R),\ell_p^2(X))}^qdt\right)^{1/p}.
\end{multline*}
\end{lemma}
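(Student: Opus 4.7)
The plan is to deduce the lemma from Lemma~\ref{lem:increment} by a dyadic telescoping argument, using the scale invariance of $dt/t$ to absorb the geometric factors that arise.

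Fix a compactly supported smooth $f:\H(\R)\to X$. Since $\|Q_t\|_{L_1(\R)}=2/(\pi t)$ by~\eqref{eq:norm Q}, and convolution with $Q_t$ in the central variable is bounded by $\|Q_t\|_{L_1(\R)}$ on $L_p(\H(\R),X)$, we have $\|Q_s*f\|_{L_p(\H(\R),X)}\to 0$ as $s\to\infty$. Consequently, for every $t\in(0,\infty)$ we may telescope
\begin{equation*}
Q_t*f = \sum_{k=0}^{\infty}\bigl(Q_{2^k t}*f - Q_{2^{k+1}t}*f\bigr),
\end{equation*}
the series converging in $L_p(\H(\R),X)$. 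Applying Lemma~\ref{lem:increment} at scale $2^k t$ to each term and using the triangle inequality yields
\begin{equation*}
\|Q_t*f\|_{L_p(\H(\R),X)} \lesssim \sum_{k=0}^{\infty} \sqrt{2^k t}\,\bigl\|Q_{2^k t}*\nabla_\H f\bigr\|_{L_p(\H(\R),\ell_p^2(X))}.
\end{equation*}

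Introduce the auxiliary function
\begin{equation*}
G(s)\eqdef s\,\bigl\|Q_s*\nabla_\H f\bigr\|_{L_p(\H(\R),\ell_p^2(X))},
\end{equation*}
so that the right-hand side of the lemma equals $\|G\|_{L_q(\R_+,ds/s)}$, while the left-hand side equals $\|H\|_{L_q(\R_+,dt/t)}$ for $H(t)\eqdef \sqrt{t}\,\|Q_t*f\|_{L_p(\H(\R),X)}$. The previous display rewrites as
\begin{equation*}
H(t) \;\lesssim\; \sum_{k=0}^{\infty} \sqrt{t}\cdot \sqrt{2^k t}\cdot (2^k t)^{-1}G(2^k t)
\;=\; \sum_{k=0}^{\infty} 2^{-k/2}\,G(2^k t).
\end{equation*}

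To finish, take the $L_q(dt/t)$ norm of both sides and use Minkowski's inequality together with the translation invariance of $dt/t$ under dilation $t\mapsto 2^k t$:
\begin{equation*}
\|H\|_{L_q(dt/t)} \;\le\; \sum_{k=0}^{\infty} 2^{-k/2}\,\|G(2^k\,\cdot)\|_{L_q(dt/t)}
\;=\; \Bigl(\sum_{k=0}^{\infty} 2^{-k/2}\Bigr)\,\|G\|_{L_q(dt/t)}\;\lesssim\; \|G\|_{L_q(dt/t)},
\end{equation*}
which is precisely the claimed inequality (the exponent $1/p$ on the right-hand side of the displayed statement of the lemma appears to be a typo for $1/q$).

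There is no substantial obstacle: Lemma~\ref{lem:increment} already encodes the hard work, and all that remains is the elementary dyadic bookkeeping above. The only point requiring attention is the justification that the telescoping series makes sense, which follows from the $L_1$-decay of $Q_t$ combined with compact support of $f$.
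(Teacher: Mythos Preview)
Your argument is correct and essentially identical to the paper's own proof: both telescope $Q_t*f$ dyadically using $\|Q_s*f\|_{L_p}\to 0$, apply Lemma~\ref{lem:increment} termwise, and sum the resulting geometric series $\sum_k 2^{-k/2}$ via Minkowski and the change of variable $s=2^k t$. Your observation that the exponent $1/p$ on the right-hand side of the displayed statement is a typo for $1/q$ is also correct (the paper's own proof repeats the same typo in its final line, but the use of the lemma in the proof of Proposition~\ref{prop:main} confirms $1/q$ is intended).
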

\begin{proof}
Observe that $\lim_{t\to\infty} Q_t*f=0$ in $L_p(\H(\R),X)$. Indeed, by Young's inequality,
$$
\|Q_t*f\|_{L_p(\H(\R),X)}\le \|Q_t\|_{L_1(\R)}\cdot \|f\|_{L_p(\H(\R),X)}\stackrel{\eqref{eq:norm Q}}{=}\frac{2}{\pi t}\cdot \|f\|_{L_p(\H(\R),X)}.
$$
We therefore have the identity $$Q_t*f=\sum_{m=1}^\infty \left(Q_{2^{m-1}t}*f-Q_{2^mt}*f\right),$$
from which it follows that
\begin{align}
\nonumber&\left(\int_0^\infty t^{\frac{q}{2}-1}\left\|Q_t*f\right\|_{L_p(\H(\R),X)}^qdt\right)^{1/q}\\
\nonumber &\le \sum_{m=1}^\infty \left(\int_0^\infty t^{\frac{q}{2}-1}\left\|Q_{2^{m-1}t}*f-Q_{2^mt}*f\right\|_{L_p(\H(\R),X)}^qdt\right)^{1/q}\\
\label{eq:use increment lemma}&\lesssim \sum_{m=1}^\infty \left(\int_0^\infty t^{\frac{q}{2}-1}(2^{m-1}t)^{q/2}\left\|Q_{2^{m-1}t}*\nabla_\H f\right\|_{L_p(\H(\R),\ell_p^2(X))}^qdt\right)^{1/q}\\
\label{eq:change of variable summand}&= \left(\sum_{m=1}^\infty \frac{1}{2^{(m-1)/2}}\right)\left(\int_0^\infty s^{q-1}\left\|Q_s*\nabla_\H f\right\|_{L_p(\H(\R),\ell_p^2(X))}^qds\right)^{1/p},
\end{align}
where~\eqref{eq:use increment lemma} uses Lemma~\ref{lem:increment}, and~\eqref{eq:change of variable summand} follows from the change of variable $s=2^{m-1}t$ in each of the summands of~\eqref{eq:use increment lemma}.
\end{proof}

Lemma~\ref{lem:enter g function} below is an analogue of a standard fact in Littlewood-Paley theory: it is commonly stated for real-valued functions defined on $\R$, while we need a statement for Banach space-valued functions defined on $\H(\R)$. In the real-valued case this fact has several known proofs, and while we do not know how to extend all of them to the vector-valued setting, the argument below is nothing more than the obvious modification of the corresponding proof in Stein's book~\cite{Ste70}.

\begin{lemma}\label{lem:enter g function}
Fix $p,q\in [1,\infty)$ and let $(X,\|\cdot\|_X)$ be a Banach space. Every smooth and compactly supported $f:\H(\R)\to X$ satisfies
\begin{multline}\label{eq:g function besov}
\left(\int_0^\infty \left(\int_{\H(\R)}\|f(hc^t)-f(h)\|_X^pd\mu(h)\right)^{q/p}\frac{dt}{t^{1+q/2}}\right)^{1/q}\\\lesssim
\left(\int_0^\infty t^{\frac{q}{2}-1}\left\|Q_t*f\right\|_{L_p(\H(\R),X)}^qdt\right)^{1/q}.
\end{multline}
\end{lemma}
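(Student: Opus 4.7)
The approach is to reduce~\eqref{eq:g function besov} to a one-dimensional Littlewood-Paley type estimate and then prove it via a Calder\'on reproducing formula combined with Hardy-type inequalities. Since convolution with a kernel $\psi\in L_1(\R)$ acts only on the central coordinate $z$, I set $Y\eqdef L_p(\R^2,X)$ and view $f$ as $F:\R\to Y$ via $F(z)(x,y)=f(x,y,z)$. Under this identification both sides of~\eqref{eq:g function besov} express themselves as iterated $L_p$-$L_q$ norms of $F$ and $Q_t*F$, so the inequality reduces to showing that for every Banach space $Y$ and every smooth, compactly supported $F:\R\to Y$,
\[
\left(\int_0^\infty \|F(\cdot+t) - F(\cdot)\|_{L_p(\R,Y)}^q \frac{dt}{t^{1+q/2}}\right)^{1/q} \lesssim \left(\int_0^\infty t^{q/2-1}\|Q_t * F\|_{L_p(\R,Y)}^q dt\right)^{1/q};
\]
no geometric property of $Y$ is needed. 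The starting point is the Calder\'on-type identity $F = -\int_0^\infty Q_s * F\, ds$ in $L_p(\R,Y)$, valid because $\partial_s P_s = Q_s$, $P_s * F \to F$ in $L_p$ as $s\to 0$, and $\|P_s * F\|_{L_p}\to 0$ as $s\to\infty$ (this last uses $\|P_s\|_{L_\infty}=1/(\pi s)$ together with the compact support of $F$). Consequently
\[
F(z+t)-F(z)=\int_0^\infty \bigl[(Q_s*F)(z)-(Q_s*F)(z+t)\bigr]\,ds,
\]
which I split at $s=t$.

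For $s\le t$ the summand is trivially bounded in $L_p(\R,Y)$ by $2\|Q_s*F\|_{L_p(\R,Y)}$. For $s>t$ I exploit the semigroup identity $Q_s=P_{s/2}*Q_{s/2}$ to rewrite
\[
(Q_s*F)(z+t)-(Q_s*F)(z)=\int_\R \bigl[P_{s/2}(u+t)-P_{s/2}(u)\bigr](Q_{s/2}*F)(z-u)\,du.
\]
Writing the bracketed difference as $\int_0^t R_{s/2}(u+r)\,dr$ and using~\eqref{eq:norm R} gives $\|P_{s/2}(\cdot+t)-P_{s/2}\|_{L_1(\R)}\le t\|R_{s/2}\|_{L_1(\R)}=4t/(\pi s)$. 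Young's inequality for Bochner-valued functions convolved with scalar kernels then yields $\|(Q_s*F)(\cdot+t)-(Q_s*F)(\cdot)\|_{L_p(\R,Y)}\lesssim (t/s)\|Q_{s/2}*F\|_{L_p(\R,Y)}$. Collecting both ranges and performing the change of variable $s/2\mapsto s$ in the tail, for $\phi(s)\eqdef\|Q_s*F\|_{L_p(\R,Y)}$ I obtain the pointwise-in-$t$ estimate
\[
\|F(\cdot+t)-F(\cdot)\|_{L_p(\R,Y)}\lesssim \int_0^t\phi(s)\,ds+t\int_t^\infty \frac{\phi(s)}{s}\,ds.
\]

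To conclude I raise this inequality to the $q$-th power, weight by $t^{-1-q/2}$, and integrate over $t>0$. Hardy's inequality controls the first summand and its dual form controls the second, both yielding bounds $\lesssim_q \int_0^\infty \phi(t)^q t^{q/2-1}\,dt$; extracting $q$-th roots gives~\eqref{eq:g function besov}. The main technical point is the estimate for $s>t$: a direct approach via $(Q_s*F)(z+t)-(Q_s*F)(z)=\int_0^t (R_s*F)(z+r)\,dr$ would introduce the kernel $R_s=\partial_x P_s$, whose associated convolution operator is (up to a constant) the Hilbert transform of $Q_s*F$ and is therefore unbounded on $L_p(\R,Y)$ for general Banach spaces $Y$. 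The semigroup trick above circumvents this by transferring the $x$-derivative onto the scalar Poisson kernel, where it is harmless.
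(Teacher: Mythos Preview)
Your argument is correct and relies on the same essential ingredients as the paper's proof: the Poisson semigroup identity $Q_s=P_{s/2}*Q_{s/2}$, the bound $\|R_{s/2}\|_{L_1(\R)}=4/(\pi s)$ combined with Young's inequality to sidestep the Hilbert transform, and the two forms of Hardy's inequality. The organization differs: the paper inserts $P_t*f$ at both endpoints and splits $f(hc^t)-f(h)$ into three pieces --- two of the form $f-P_t*f=-\int_0^t Q_s*f\,ds$, plus a middle piece $P_t*f(hc^t)-P_t*f(h)=\int_0^t R_t*f(hc^u)\,du$ which is then expanded via $R_t*f=-\int_t^\infty R_{s/2}*(Q_{s/2}*f)\,ds$ --- whereas you write the full Calder\'on identity for $F(\cdot+t)-F(\cdot)$ and split the resulting $s$-integral at $s=t$. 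Your packaging is a bit more economical (two ranges instead of three terms, and you never introduce $R_t*f$ as an intermediate object), but the analytic content is the same.

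One small slip: your justification that $\|P_s*F\|_{L_p(\R,Y)}\to 0$ as $s\to\infty$ fails at the endpoint $p=1$, since $\|P_s\|_{L_1(\R)}=1$ for all $s>0$. This does not affect the argument: what you actually need is that the \emph{difference} $P_s*F(\cdot+t)-P_s*F(\cdot)\to 0$ in $L_p$, and that holds for every $p\ge 1$ because $\|P_s(\cdot+t)-P_s\|_{L_1(\R)}\le t\,\|R_s\|_{L_1(\R)}=2t/(\pi s)\to 0$.
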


\begin{proof}
For every $(h,t)\in \H(\R)\times (0,\infty)$ write
\begin{multline}\label{eq:three term semigroup}
f(hc^t)-f(x)=[f(hc^t)-P_t*f(hc^t)]\\+[P_t*f(hc^t)-P_t*f(h)]+[P_t*f(h)-f(h)].
\end{multline}
We proceed to bound each of the terms in the right hand side of~\eqref{eq:three term semigroup} separately. Firstly,
\begin{align}
\nonumber&\left(\int_0^\infty \left(\int_{\H(\R)}\|f(hc^t)-P_t*f(hc^t)\|_X^pd\mu(h)\right)^{q/p}\frac{dt}{t^{1+q/2}}\right)^{1/q}
\\\label{eq:recall Q_t}&=  \left(\int_0^\infty \left(\int_{\H(\R)}\left\|\int_0^t Q_s*f(hc^t)ds\right\|_{X}^pd\mu(h)\right)^{q/p} \frac{dt}{t^{1+q/2}}\right)^{1/q}
\\\label{eq:use triangle in Lp}&\le \left(\int_0^\infty \left(\int_0^t \left\|Q_s*f\right\|_{L_p(\H(\R),X)} ds\right)^q\frac{dt}{t^{1+q/2}}\right)^{1/q}
\\
\label{eq:first of three}&\le 2\left(\int_0^\infty t^{\frac{q}{2}-1}\|Q_t*f(x)\|_{L_p(\H(\R),X)}^qdt\right)^{1/q},
\end{align}
where for~\eqref{eq:recall Q_t} recall that in~\eqref{eq:defQ} we defined $Q_t$ as the time derivative of $P_t$, in~\eqref{eq:use triangle in Lp} we used the triangle inequality in $L_p(\H(\R),X)$ and the fact that the Haar measure $\mu$ is right-invariant,  and in~\eqref{eq:first of three} we used Hardy's inequality (see~\cite[Sec.~A.4]{Ste70}). The rightmost term in the right hand side of~\eqref{eq:three term semigroup} is bounded using the same argument, yielding the following inequality.
\begin{multline}\label{eq:third of three}
\left(\int_0^\infty \left(\int_{\H(\R)}\|P_t*f(h)-f(h)\|_X^pd\mu(h)\right)^{q/p}\frac{dt}{t^{1+q/2}}\right)^{1/q}\\
\le 2\left(\int_0^\infty t^{\frac{q}{2}-1}\|Q_t*f(x)\|_{L_p(\H(\R),X)}^qdt\right)^{1/q}.
\end{multline}

To bound the middle term in the right hand side of~\eqref{eq:three term semigroup} recall that by the semigroup property of the Poisson kernel, for every $t\in (0,\infty)$ we have $P_{t}=P_{t/2}*P_{t/2}$. As we have done in the proof of Lemma~\ref{lem:increment}, differentiation of this identity with respect to $t$ yields $Q_{t}=P_{t/2}*Q_{t/2}$. Consequently, recalling the definition of $R_t$ in~\eqref{eq:defR},
\begin{equation}\label{eq:R derivative}
\frac{\partial}{\partial t} R_t=\frac{\partial}{\partial x} Q_t= \frac{\partial}{\partial x}\left(P_{t/2}*Q_{t/2}\right)=R_{t/2}*Q_{t/2}.
\end{equation}
Observe that $\lim_{t\to \infty} R_t*f=0$ in $L_p(\H(\R),X)$. Indeed,
\begin{equation*}
\|R_t*f\|_{L_p(\H(\R),X)}\le \|R_t\|_{L_1(\R)}\cdot \|f\|_{L_p(\H(\R),X)}\stackrel{\eqref{eq:norm R}}{=}\frac{2}{\pi t}\cdot \|f\|_{L_p(\H(\R),X)}.
\end{equation*}
Consequently,
\begin{equation}\label{R integral representation}
R_t*f=-\int_t^\infty \frac{\partial}{\partial s} (R_s*f) ds\stackrel{\eqref{eq:R derivative}}{=}-\int_t^\infty R_{s/2}*(Q_{s/2}*f)ds.
\end{equation}
The middle term in the right hand side of~\eqref{eq:three term semigroup} can therefore be rewritten as follows.
\begin{multline}\label{eq:middle identity}
P_t*f(hc^t)-P_t*f(h)=\int_0^t R_t*f(hc^u)du\\=-\int_0^t \int_t^\infty R_{s/2}*(Q_{s/2}*f)(hc^u)dsdu.
\end{multline}
Observe that for every $t\in (0,\infty)$ we have
\begin{align}
&\nonumber\left(\int_{\H(\R)}\left\|\int_0^t \int_t^\infty R_{s/2}*(Q_{s/2}*f)(hc^u)dsdu\right\|_X^pd\mu(h)\right)^{1/p}\\
&\le \label{eq:triangle for norm} \int_0^t \int_t^\infty \left(\int_{\H(\R)}\left\|R_{s/2}*(Q_{s/2}*f)(hc^u)\right\|_X^pd\mu(h)\right)^{1/p}ds du
\\&= t\int_t^\infty \left\|R_{s/2}*(Q_{s/2}*f)\right\|_{L_p(\H(\R),X)}ds,\label{eq:t factor}
\end{align}
where~\eqref{eq:triangle for norm} uses the triangle inequality in $L_p(\H(\R),X)$ and~\eqref{eq:t factor} uses the right-invariance of the Haar measure $\mu$ . By combining~\eqref{eq:middle identity} and~\eqref{eq:t factor} the middle term in the right hand side of~\eqref{eq:three term semigroup} is bounded as follows.
\begin{align}
&\nonumber \left(\int_0^\infty \left(\int_{\H(\R)}\|P_t*f(hc^t)-P_t*f(x)\|_X^pd\mu(h)\right)^{q/p}\frac{dt}{t^{1+q/2}}\right)^{1/q}\\
&\nonumber \le \left(\int_0^\infty t^{\frac{q}{2}-1} \left(\int_t^\infty \left\|R_{s/2}*(Q_{s/2}*f)\right\|_{L_p(\H(\R),X)}ds\right)^qdt\right)^{1/q}\\
& \le\label{eq:for young} 2 \left(\int_0^\infty t^{\frac{3q}{2}-1} \left\|R_{t/2}*(Q_{t/2}*f)\right\|_{L_p(\H(\R),X)}^qdt\right)^{1/q},
\end{align}
where~\eqref{eq:for young} uses the second form of Hardy's inequality (see~\cite[Sec.~A.4]{Ste70}). By Young's inequality, for every $t\in (0,\infty)$ we have
\begin{multline}\label{eq:young}
\| R_{t/2}*Q_{t/2}*f\|_{L_p(\H(\R),X)}\le \|R_{t/2}\|_{L_1(\R)}\cdot \|Q_{t/2}*f\|_{L_p(\H(\R),X)}\\\stackrel{\eqref{eq:norm R}}{=} \frac{4}{\pi t}\cdot \|Q_{t/2}*f\|_{L_p(\H(\R),X)}.
\end{multline}
A substitution of~\eqref{eq:young} into~\eqref{eq:for young} yields the estimate
\begin{multline}\label{eq;change of variable}
\left(\int_0^\infty \left(\int_{\H(\R)}\|P_t*f(hc^t)-P_t*f(h)\|_X^pd\mu(h)\right)^{q/p}\frac{dt}{t^{1+q/2}}\right)^{1/q}\\\le \frac{8\sqrt{2}}{\pi}\left( \int_0^\infty  t^{\frac{q}{2}-1}  \|Q_{t}*f\|_{L_p(\H(\R),X)}^q dt\right)^{1/q}.
\end{multline}
The desired bound~\eqref{eq:g function besov} now follows by applying the triangle inequality in $L_q(t^{-1-q/2}dxdt,L_p(\H(\R),X))$ to~\eqref{eq:three term semigroup} while using \eqref{eq:first of three}, \eqref{eq:third of three}, \eqref{eq;change of variable}.
\end{proof}

\begin{proof}[Proof of Proposition~\ref{prop:main}]
Substitute Lemma~\ref{lem:integrated increment} into Lemma~\ref{lem:enter g function}.
\end{proof}

\section{Proof of Theorem~\ref{thm:main}}\label{sec:proof main}

Our goal here is to prove the following theorem, the case $p=q$ of which is Theorem~\ref{thm:main}.
\begin{theorem}\label{thm:pq-local}
For every $\eta\in (0,1)$, $q\in [2,\infty)$ and $p\in (1,q]$ there exists $K=K(\eta,p,q)\in (0,\infty)$ with the following property. Suppose that $(X,\|\cdot\|_X)$ is a Banach space satisfying $\d_{(X,\|\cdot\|_X)}(\e)\ge (\eta\e)^q$ for every $\e\in (0,1)$. Then for every $n\in \N$ and every $f:\H\to X$ we have
\begin{multline}\label{eq:desired local pq}
\left(\sum_{k=1}^{n^2}\frac{1}{k^{1+q/2}}\left(\sum_{x\in B_n} \|f(xc^k)-f(x)\|_X^p\right)^{q/p}\right)^{1/q}\\\le
K\left(\sum_{x\in B_{21n}} \Big(\|f(xa)-f(x)\|^p_X+\|f(xb)-f(x)\|^p_X\Big)\right)^{1/p}.
\end{multline}
\end{theorem}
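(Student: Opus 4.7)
The plan is to deduce the discrete inequality~\eqref{eq:desired local pq} from the continuous Poincar\'e inequality of Theorem~\ref{thm:real} by a standard discretization-via-convolution scheme. Fix once and for all a smooth non-negative function $\psi:\H(\R)\to\R$ with compact support of universal radius around $e_\H$ whose lattice-translates form a partition of unity, $\sum_{y\in \H}\psi(y^{-1}h)=1$ for all $h\in\H(\R)$; such a $\psi$ is built by normalizing any smooth non-negative bump whose translate-sum is everywhere positive. Given $f:\H\to X$ define the smooth extension
$$F(h)=\sum_{y\in\H}f(y)\psi(y^{-1}h),$$
and a smooth cutoff $\chi:\H(\R)\to[0,1]$ with $\chi\equiv 1$ on the $\H(\R)$-ball $B_{19n}^{(\R)}$, $\chi\equiv 0$ off $B_{20n}^{(\R)}$, and $\|\nabla_\H\chi\|_\infty\lesssim 1/n$. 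Then $G=\chi F$ is smooth and compactly supported, so Theorem~\ref{thm:real} applies to $G$.

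\textbf{Upper bound on the continuous RHS.} Differentiating the PU identity yields $\sum_y(\partial_a\psi)(y^{-1}h)=0$, so for any reference lattice point $y_0=y_0(h)\in\H$ near $h$,
$$\partial_a F(h)=\sum_{y}\bigl(f(y)-f(y_0)\bigr)(\partial_a\psi)(y^{-1}h).$$
Only finitely many $y$ (each with $d_W(y,h)=O(1)$) contribute, so $f(y)-f(y_0)$ telescopes into a sum of $O(1)$ horizontal edge-differences $f(xa)-f(x)$, $f(xb)-f(x)$. Raising to the $p$-th power and integrating over $h\in\H(\R)$ yields the bulk bound $\|\chi\nabla_\H F\|_{L_p}^p\lesssim \sum_{x\in B_{21n}}[\|f(xa)-f(x)\|^p+\|f(xb)-f(x)\|^p]$, where the ball $B_{21n}$ absorbs the $O(1)$ spread of $\psi$. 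For the complementary boundary term $F\nabla_\H\chi$, supported in $B_{20n}^{(\R)}\smallsetminus B_{19n}^{(\R)}$, subtract an arbitrary constant from $f$ (the inequality is invariant under such shifts) and apply a discrete Poincar\'e estimate on the annulus, whose $O(n)$ constant is exactly cancelled by the $O(1/n)$ factor from $\nabla_\H\chi$. This yields
$$\|\nabla_\H G\|_{L_p}^p\lesssim \sum_{x\in B_{21n}}\bigl[\|f(xa)-f(x)\|^p+\|f(xb)-f(x)\|^p\bigr].$$

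\textbf{Lower bound on the continuous LHS.} Write each $h\in\H(\R)$ as $h=xu$ with $x\in\H$ nearest and $u$ in a unit fundamental domain $U$. Since $c$ is central, $hc^t=xc^tu$. Decomposing
$$f(xc^k)-f(x)=\bigl[f(xc^k)-F(xc^k u)\bigr]+\bigl[F(xc^k u)-F(xu)\bigr]+\bigl[F(xu)-f(x)\bigr],$$
the PU identity rewrites the first and third brackets as weighted sums of horizontal differences of $f$ at lattice points near $xc^k$ and $x$, respectively, and $F(xc^tu)-F(xc^ku)$ for $t\in[k-\tfrac12,k+\tfrac12]$ is similarly controlled. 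Averaging over $(t,u)\in[k-\tfrac12,k+\tfrac12]\times U$, summing over $x\in B_n$, using the right-invariance of Haar measure, and noting that $B_n\cdot c^k\subset B_{5n}\subset B_{19n}^{(\R)}$ (so $G=F$ on the relevant region), gives
$$\sum_{x\in B_n}\|f(xc^k)-f(x)\|^p\lesssim \int_{k-\tfrac12}^{k+\tfrac12}\int_{\H(\R)}\|G(hc^t)-G(h)\|^p \, d\mu(h)\, dt+H,$$
with $H=\sum_{x\in B_{21n}}[\|f(xa)-f(x)\|^p+\|f(xb)-f(x)\|^p]$ (the universal $C$ arising from the averaging fits within $21$). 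Raise both sides to the $q/p$-th power, which by Jensen preserves the direction since $p\leq q$; multiply by $k^{-1-q/2}$ and sum over $k\in\{1,\dots,n^2\}$. The integral sum $\sum_k k^{-1-q/2}\int_{k-\tfrac12}^{k+\tfrac12}(\cdot)\,dt$ then comparisons to $\int_0^\infty(\cdot)t^{-1-q/2}\,dt$, reproducing the $q$-th power of the continuous LHS, while $H^{q/p}\sum_k k^{-1-q/2}=O(H^{q/p})$ is again absorbed into the discrete RHS. Applying Theorem~\ref{thm:real} to $G$ chains together the bounds to yield~\eqref{eq:desired local pq}.

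\textbf{Main obstacle.} The delicate part is the geometric tuning: the supports of $\psi$ and $\chi$ must be large enough so that $G=F$ on the $(h,t)$-region witnessing the discrete LHS, yet small enough so that the horizontal-gradient estimate from Step 2 localizes to $B_{21n}$. In particular, the boundary term $F\nabla_\H\chi$ needs to be absorbed using a discrete Poincar\'e-type estimate on the annulus, which is where the $1/n$-Lipschitz cutoff is essential. This bookkeeping is where the (non-sharp) constant $21$ in the statement arises; once the scales are aligned, the remaining analytic manipulations (Jensen, Haar right-invariance, and comparison of sums with integrals) are routine.
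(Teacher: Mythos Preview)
Your proposal is correct and follows essentially the same strategy as the paper: extend $f$ to a smooth function on $\H(\R)$ via a partition of unity, apply a cutoff so that Theorem~\ref{thm:real} becomes applicable, control the boundary contribution of the cutoff by first centering $f$ and invoking a local Poincar\'e inequality (the paper's Lemma~3.1, due to Kleiner), and compare the discrete vertical sum with the continuous vertical integral through the partition of unity and right-invariance of Haar measure. The only organizational difference is that the paper performs the cutoff \emph{before} extending (defining $\phi=\xi f$ on $\H$ and then smoothing $\phi$), and in doing so factors the argument through a separate global statement for finitely supported $f:\H\to X$ (Theorem~\ref{thm:pq-global}); you instead extend first and cut off in $\H(\R)$, going directly to the local estimate. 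Both orderings work and use the same ingredients; the paper's route has the advantage of isolating the global inequality as a result of independent interest, while yours is slightly more streamlined if only the local statement is the goal.
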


Theorem~\ref{thm:pq-local} will be shown to follow from its continuous counterpart, i.e., Theorem~\ref{thm:real}. This deduction resembles the corresponding argument appearing in Section 7 of~\cite{ANT10}, though there are some crucial differences. Other than dealing with an inequality of a different form, the main differences follow from the fact that~\cite{ANT10} argues about a different notion of horizontal gradient, namely a coarse counterpart of $\nabla_\H$, and from the fact that~\cite{ANT10} uses a Euclidean version of Kleiner's local Poincar\'e inequality~\cite{Kle10}, while we need a similar statement for functions taking values in general Banach spaces.

\subsection{A local Poincar\'e inequality on $\H$}
Lemma~\ref{lem:metric bruce} below extends the local Poincar\'e inequality of Kleiner~\cite[Thm.~2.2]{Kle10}, who proved the same statement for real-valued functions and $p=2$. The simple argument below shows that the same result holds true for functions taking value in a general metric space, a fact that will be useful for us later. In~\cite{Kle10} the corresponding result is stated for arbitrary finitely generated groups, and while we state it for $\H$, the same argument works mutatis mutandis in the full generality of~\cite{Kle10}.

\begin{lemma}\label{lem:metric bruce} Fix $p\in [1,\infty)$ and $n\in \N$. Let $(M,d_M)$ be a metric space. For every $f:\H\to M$,
\begin{multline}\label{eq:bruce}
\sum_{x,y\in B_n} d_M(f(x),f(y))^p\\\lesssim (2n)^{p+4} \sum_{x\in B_{3n}} \Big(d_M(f(xa),f(x))^p+d_M(f(xb),f(x))^p\Big).
\end{multline}
\end{lemma}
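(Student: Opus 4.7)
The plan is to prove this via a standard chaining argument along geodesic paths in the Cayley graph of $\H$, adapted to the metric-space-valued setting with Jensen's inequality replacing orthogonality.

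First, for each $w\in B_{2n}$, fix a geodesic word representation $w = s_1(w)\cdots s_{\ell(w)}(w)$, where each $s_i(w)\in \{a,b,a^{-1},b^{-1}\}$ and $\ell(w) \eqdef d_W(w, e_\H)\le 2n$. Combining the triangle inequality in $(M,d_M)$ with the convexity estimate $\bigl(\sum_{i=1}^{N} t_i\bigr)^p \le N^{p-1}\sum_{i=1}^{N} t_i^p$ yields, for every $x\in B_n$ and $w\in B_{2n}$,
\[
d_M(f(x),f(xw))^p \leq (2n)^{p-1} \sum_{i=1}^{\ell(w)} d_M\bigl(f(u_i), f(u_i s_i(w))\bigr)^p,
\]
where $u_i = u_i(x,w) \eqdef x s_1(w)\cdots s_{i-1}(w)$. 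Observe that both $u_i$ and $u_i s_i(w)$ lie in $B_n \cdot B_{2n} \subseteq B_{3n}$.

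Since each pair $(x,y)\in B_n\times B_n$ satisfies $x^{-1}y\in B_{2n}$, substituting $w=x^{-1}y$ and summing yields
\[
\sum_{x,y\in B_n} d_M(f(x),f(y))^p \leq (2n)^{p-1} \sum_{x\in B_n}\sum_{w\in B_{2n}}\sum_{i=1}^{\ell(w)} d_M\bigl(f(u_i), f(u_i s_i(w))\bigr)^p.
\]
The key step is to rewrite each edge-term $d_M(f(u_i), f(u_i s_i(w)))^p$ uniformly as $d_M(f(v), f(v\alpha))^p$ for some $\alpha\in\{a,b\}$ and base point $v\in B_{3n}$: if $s_i(w)\in\{a,b\}$, set $\alpha=s_i(w)$ and $v=u_i$; otherwise $s_i(w)=\alpha^{-1}$ for some $\alpha\in\{a,b\}$, and we set $v=u_i s_i(w)$, so that the edge-term becomes $d_M(f(v\alpha), f(v))^p$. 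In either case $v\in B_{3n}$.

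For each pair $(v,\alpha)\in B_{3n}\times\{a,b\}$, the number of triples $(x,w,i)$ producing the edge $(v,v\alpha)$ under this correspondence is at most $\sum_{w\in B_{2n}}\ell(w)\le 2n\,|B_{2n}|$, since once $(w,i)$ is fixed the element $x$ is uniquely determined by $v$. Swapping the order of summation and using the standard volume estimate $|B_{2n}|\asymp n^4$ then gives
\[
\sum_{x,y\in B_n} d_M(f(x),f(y))^p \lesssim (2n)^{p-1}\cdot 2n\cdot (2n)^4 \sum_{v\in B_{3n}} \bigl(d_M(f(va),f(v))^p + d_M(f(vb),f(v))^p\bigr),
\]
which is exactly~\eqref{eq:bruce}. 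The only mildly subtle point is the reindexing for the inverse generators $a^{-1}, b^{-1}$; the fact that the resulting base point $v$ does not escape $B_{3n}$ is automatic because every intermediate vertex of a geodesic from $x$ to $xw$ already lies in $B_n \cdot B_{2n} = B_{3n}$.
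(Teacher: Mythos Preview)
Your proof is correct and follows essentially the same chaining argument as the paper: write $x^{-1}y\in B_{2n}$ as a word in the generators, apply the triangle inequality and H\"older along the path, then sum and reindex so that each edge contributes a term of the form $d_M(f(va),f(v))^p$ or $d_M(f(vb),f(v))^p$ with $v\in B_{3n}$, and finally bound the multiplicity by $2n\,|B_{2n}|\asymp n^5$. The only cosmetic difference is that the paper pads all words to fixed length $2n$ using the letter $e_\H$, whereas you use geodesic words of variable length $\ell(w)\le 2n$; neither the counting nor the containment $v\in B_{3n}$ is affected.
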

\begin{proof}
For every $z\in B_{2n}$ choose $s_1(z),\ldots,s_{2n}(z)\in \{a,b,a^{-1},b^{-1},e_\H\}$ such that $z=s_1(z)\cdots s_{2n}(z)$. For  $i\in \{1,\ldots,2n\}$ write  $w_i(z)=s_1(z)\cdots s_i(z)$ and set $w_0(z)=e_\H$. By the triangle inequality and H\"older's inequality, for every $x,y\in B_n$ we have
\begin{multline*}
d_M(f(x),f(y))^p\\\le (2n)^{p-1}\sum_{i=0}^{2n-1} d_M\left(f\left(xw_{i}\left(x^{-1}y\right)\right),f\left(xw_{i}\left(x^{-1}y\right)s_{i+1}\left(x^{-1}y\right)\right)\right)^p.
\end{multline*}
Consequently,
\begin{align*}
&\sum_{x,y\in B_n} d_M(f(x),f(y))^p\\&\le (2n)^{p-1}\sum_{z\in B_{2n}}\sum_{i=1}^{2n-1}\sum_{x\in B_n}d_M\left(f\left(xw_{i}\left(z\right)\right),f\left(xw_{i}\left(z\right)s_{i+1}\left(z\right)\right)\right)^p\\
&= (2n)^{p-1} \sum_{z\in B_{2n}}\sum_{i=0}^{2n-1}\sum_{g\in B_nw_i(z)} d_M\left(f(g),f(gs_{i+1}(z))\right)^p\\
&\le (2n)^{p-1} \cdot |B_{2n}|\cdot 2n\sum_{h\in B_{3n}} \Big(d_M(f(ha),f(h))^p+d_M(f(hb),f(h))^p\Big).
\end{align*}
Since $|B_{2n}|\asymp n^4$, the desired inequality~\eqref{eq:bruce} follows.
\end{proof}

\subsection{Localization of a discrete global inequality}

Theorem~\ref{thm:pq-local} is a local vertical versus horizontal Poincar\'e inequality in the sense that it involves sums over balls in $\H$. While this form of the inequality is important for the deduction of lower bounds on bi-Lipschitz distortion of balls in $\H$, the natural discrete analogue of Theorem~\ref{thm:real} is the following global vertical versus horizontal Poincar\'e inequality on $\H$.

\begin{theorem}\label{thm:pq-global}
For every $\eta\in (0,\infty)$, $q\in [2,\infty)$ and $p\in (1,q]$ there exists $K=K(\eta,p,q)\in (0,\infty)$ with the following property. Suppose that $(X,\|\cdot\|_X)$ is a Banach space satisfying $\d_{(X,\|\cdot\|_X)}(\e)\ge (\eta\e)^q$ for every $\e\in (0,1)$. Then for every finitely supported $f:\H\to X$ we have
\begin{multline}\label{eq:global main}
\left(\sum_{k=1}^{\infty}\frac{1}{k^{1+q/2}}\left(\sum_{x\in \H} \|f(xc^k)-f(x)\|_X^p\right)^{q/p}\right)^{1/q}\\\le
K\left(\sum_{x\in \H} \Big(\|f(xa)-f(x)\|^p_X+\|f(xb)-f(x)\|^p_X\Big)\right)^{1/p}.
\end{multline}
\end{theorem}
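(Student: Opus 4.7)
My plan is to reduce the discrete inequality~\eqref{eq:global main} to its continuous analogue~\eqref{eq:continuous main} by encoding the finitely supported $f:\H\to X$ as a smooth, compactly supported $F:\H(\R)\to X$ whose two sides of~\eqref{eq:continuous main} dominate, respectively, the two sides of~\eqref{eq:global main} applied to $f$, up to absolute multiplicative constants. Applying Theorem~\ref{thm:real} to $F$ and chaining the two comparisons then yields~\eqref{eq:global main}. This is a ``lift to the Lie group and transfer back'' strategy in the spirit of~\cite[Sec.~7]{ANT10}, but the construction must be redone because~\eqref{eq:continuous main} involves the genuine horizontal gradient $\nabla_\H F$ rather than a coarse finite-difference gradient.

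Fix $a\in(0,1/2)$ and a smooth $\theta:\R\to[0,1]$ with $\theta\equiv 1$ on $[-a,a]$, $\supp(\theta)\subset[-(1-a),1-a]$, and $\sum_{n\in\Z}\theta(s-n)\equiv 1$. Set $\psi(h_1,h_2,h_3)\eqdef\theta(h_1)\theta(h_2)\theta(h_3)$ and
\begin{equation*}
F(h)\eqdef\sum_{x\in\H}f(x)\,\psi(x^{-1}h),\qquad h\in\H(\R).
\end{equation*}
Writing $x=(m_1,m_2,m_3)\in\H$ in coordinates, the Heisenberg group law gives $x^{-1}h=(h_1-m_1,\,h_2-m_2,\,h_3-m_3-m_1(h_2-m_2))$; summing $\theta$ of the third entry over $m_3\in\Z$ equals $1$ for every fixed $(m_1,m_2,h)$, and then summing over $(m_1,m_2)$ gives $1$. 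Thus $\{\psi(x^{-1}\cdot)\}_{x\in\H}$ is a smooth partition of unity on $\H(\R)$ subordinate to $\H$, and $F$ is smooth and compactly supported because $f$ has finite support. Since the $\psi(y^{-1}h)$ are non-negative and $\theta\equiv 1$ on $[-a,a]$, the partition-of-unity identity forces $F\equiv f(x)$ on each core $V_x\eqdef x\cdot[-a,a]^3$.

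For the vertical side, centrality of $c$ gives $(xc^k)^{-1}(hc^t)=c^{t-k}(x^{-1}h)$, which lies in the core $[-a,a]^3$ whenever $h\in V_x^\flat\eqdef x\cdot([-a,a]^2\times[-a/2,a/2])$ and $|t-k|\le a/2$; on this set $F(hc^t)-F(h)\equiv f(xc^k)-f(x)$. Since the slabs $[k-a/2,k+a/2]$ are disjoint across $k\in\N$, integrating $dt/t^{1+q/2}$ over them and summing over $x\in\H$ yields
\begin{multline*}
\int_0^\infty\left(\int_{\H(\R)}\|F(hc^t)-F(h)\|_X^p\,d\mu(h)\right)^{q/p}\frac{dt}{t^{1+q/2}}\\\gtrsim_{a,q}\sum_{k=1}^\infty\frac{1}{k^{1+q/2}}\left(\sum_{x\in\H}\|f(xc^k)-f(x)\|_X^p\right)^{q/p}.
\end{multline*}
For the horizontal side, differentiating $\sum_x\psi(x^{-1}h)\equiv 1$ gives $\sum_x(\partial_a\psi)(x^{-1}h)\equiv 0$, hence
\begin{equation*}
\partial_a F(h)=\sum_{x\in\H}\bigl(f(x)-f(x_0(h))\bigr)(\partial_a\psi)(x^{-1}h),
\end{equation*}
where $x_0(h)\in\H$ is the unique lattice point with $x_0(h)^{-1}h\in[-1/2,1/2)^3$. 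Only boundedly many $x$ contribute, each within word-distance $O(1)$ of $x_0(h)$; expressing $f(x)-f(x_0(h))$ as a telescoping sum of bounded length in $\{a^{\pm 1},b^{\pm 1}\}$, applying Jensen's inequality, integrating in $h$, using right-invariance of $\mu$, and arguing identically for $\partial_b$ yields
\begin{equation*}
\int_{\H(\R)}\|\nabla_\H F\|_{\ell_p^2(X)}^p\,d\mu\lesssim\sum_{x\in\H}\bigl(\|f(xa)-f(x)\|_X^p+\|f(xb)-f(x)\|_X^p\bigr).
\end{equation*}

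Applying Theorem~\ref{thm:real} to $F$ and combining the two displayed comparisons produces~\eqref{eq:global main} with $K=K(\eta,p,q)$. The delicate step, and the only one that requires genuinely Heisenberg-specific care, is verifying that the product bump $\psi=\theta\otimes\theta\otimes\theta$ is a partition of unity with respect to the non-abelian lattice action: this works precisely because the shear $-m_1(h_2-m_2)$ in the third coordinate of $x^{-1}h$ is independent of $m_3$, so the one-dimensional partition-of-unity sum in $m_3$ can be carried out first and collapses to $1$ regardless of the value of this shear, after which the remaining sum in $(m_1,m_2)$ reduces to the standard abelian case.
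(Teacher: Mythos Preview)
Your proof is correct and follows the same partition-of-unity transfer strategy as the paper's proof in Section~\ref{sec:discretization}: extend $f$ to a smooth compactly supported $F$ on $\H(\R)$ via $F(h)=\sum_{x\in\H}f(x)\psi(x^{-1}h)$, apply Theorem~\ref{thm:real} to $F$, and compare both sides back to the discrete quantities using that only boundedly many lattice points contribute at each $h$. The one noteworthy difference is that your explicit tensor-product bump $\psi=\theta\otimes\theta\otimes\theta$ (whose partition-of-unity property you correctly verify by summing first in $m_3$) makes $F$ \emph{exactly} constant on each core $V_x$, so your vertical comparison is an identity on a set of positive measure; the paper instead uses an abstract $\chi$ and bounds $\|F(hc^t)-f(xc^k)\|$ and $\|F(h)-f(x)\|$ by local discrete differences, producing an additive error term of the form $\sum_{x\in\H}\sum_{z\in B_r}\|f(xz)-f(x)\|_X^p$ that must then be absorbed via Lemma~\ref{lem:global poincare-simple}. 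Your route is a pleasant streamlining of that step, but the overall architecture---including the horizontal-gradient estimate, which in both cases is the telescoping argument underlying Lemma~\ref{lem:global poincare-simple}---is the same.
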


Theorem~\ref{thm:pq-global} will be deduced from Theorem~\ref{thm:real} via  a partition of unity argument. This is done is Section~\ref{sec:discretization} below. We shall now assume the validity of Theorem~\ref{thm:pq-global} and proceed, using Lemma~\ref{lem:metric bruce}, to conclude the proof of Theorem~\ref{thm:pq-local}.

\begin{proof}[Proof of Theorem~\ref{thm:pq-local}]
The argument follows the proof of Claim~7.2 in~\cite{ANT10}. Fix $n\in \N$ and a finitely supported $f:\H\to X$. By translating $f$ we may assume that $\sum_{x\in B_{7n}}f(x)=0$. Due to Lemma~\ref{lem:metric bruce}, this implies that
\begin{align}\label{eq:use bruce}
&\nonumber\left(\sum_{x\in B_{7n}}\|f(x)\|_X^p\right)^{1/p}\\\nonumber&= \left(\sum_{x\in B_{7n}}\left\|\frac{1}{|B_{7n}|}\sum_{y\in B_{7n}}(f(x)-f(y))\right\|_X^p\right)^{1/p}\\
&\le \nonumber \left(\frac{1}{|B_{7n}|}\sum_{x,y\in B_{7n}}\|f(x)-f(y)\|_X^p\right)^{1/p}\\
&\lesssim n\left(\sum_{x\in B_{21n}}\Big(\|f(xa)-f(x)\|_X^p+\|f(xb)-f(x)\|_X^p\Big)\right)^{1/p}.
\end{align}

Define a cutoff function $\xi:\H\to [0,1]$ by
$$
\xi(x)\eqdef \left\{\begin{array}{ll}1& x\in B_{5n},\\
6-\frac{d_W(x,e)}{n}& x\in B_{6n}\setminus B_{5n},\\
0& x\in \H\setminus B_{6n},
\end{array}\right.
$$
and let $\phi \eqdef \xi f$. Then $\phi$ is supported on $B_{6n}$. Since $\xi$ is $\frac{1}{n}$-Lipschitz and takes values in $[0,1]$, for all $s\in \{a,b\}$ and $x\in \H$ we have
\begin{multline}\label{eq:tildef}
\|\phi(x)-\phi(xs)\|_X\le |\xi(x)-\xi(xs)|\cdot\|f(x)\|_X+|\xi(xs)|\cdot \|f(x)-f(xs)\|_X\\\le
\frac{1}{n}\|f(x)\|_X+\|f(x)-f(xs)\|_X.
\end{multline}

If $k\in \{1,\ldots, n^2\}$ then $d_W(e_\H,c^k)\le 4n$ (see~\cite{Bla03}). Consequently, for every $x\in B_n$ we have $xc^k\in B_{5n}$, and therefore $\phi(x)=f(x)$ and $\phi(xc^k)=f(xc^k)$. Hence,
\begin{multline}\label{eq:pass to phi}
\left(\sum_{k=1}^{n^2}\frac{1}{k^{1+q/2}}\left(\sum_{x\in B_n}\|f(xc^k)-f(x)\|_X^p\right)^{q/p}\right)^{1/q}\\
\le  \left(\sum_{k=1}^{\infty}\frac{1}{k^{1+q/2}}\left(\sum_{x\in \H} \|\phi(xc^k)-\phi(x)\|_X^p\right)^{q/p}\right)^{1/q}.
\end{multline}
Moreover,
\begin{align}
&\nonumber\left(\sum_{x\in \H} \Big(\|\phi(xa)-\phi(x)\|^p_X+\|\phi(xb)-\phi(x)\|^p_X\Big)\right)^{1/p}\\
&=\label{eq:use support} \left(\sum_{x\in B_{7n}} \Big(\|\phi(xa)-\phi(x)\|^p_X+\|\phi(xb)-\phi(x)\|^p_X\Big)\right)^{1/p}\\
&\le \nonumber \frac{2^{1/p}}{n}\left(\sum_{x\in B_{7n}} \|f(x)\|_X^p\right)^{1/p}\\&\qquad+\left(\sum_{x\in B_{7n}} \Big(\|f(xa)-f(x)\|^p_X+\|f(xb)-f(x)\|^p_X\Big)\right)^{1/p}\label{eq:use truncation}\\
&\lesssim \label{eq:use 21}\left(\sum_{x\in B_{21n}} \Big(\|f(xa)-f(x)\|^p_X+\|f(xb)-f(x)\|^p_X\Big)\right)^{1/p},
\end{align}
where~\eqref{eq:use support} holds true since $\phi$ is supported on $B_{6n}$, \eqref{eq:use truncation} uses~\eqref{eq:tildef}, and~\eqref{eq:use 21} uses~\eqref{eq:use bruce}. The desired inequality~\eqref{eq:desired local pq} now follows from an application of Theorem~\ref{thm:pq-global} to $\phi$, combined with~\eqref{eq:pass to phi} and~\eqref{eq:use 21}.
\end{proof}

\subsection{Discretization of Theorem~\ref{thm:real}}\label{sec:discretization} Here we prove Theorem~\ref{thm:pq-global}, thus completing the proof of Theorem~\ref{thm:pq-local}, and, as a special case, completing the proof of Theorem~\ref{thm:main}. The argument below is a variant of the proof of Claim~7.3 in~\cite{ANT10}.

We will use the following simple lemma, whose proof is similar to the proof of Lemma~\ref{lem:metric bruce}. Below, a mapping $f:\H\to M$ is said to be finitely supported if there exists $m_0\in M$ such that $|f^{-1}(M\setminus\{m_0\})|<\infty$.

\begin{lemma}\label{lem:global poincare-simple} Fix $p\in [1,\infty)$ and $n\in \N$. Let $(M,d_M)$ be a metric space. For every finitely supported $f:\H\to M$,
\begin{multline}\label{eq:bruce}
\sum_{y\in \H}\sum_{z\in B_n} d_M(f(yz),f(y))^p\\\lesssim n^{p+4} \sum_{x\in \H} \Big(d_M(f(xa),f(x))^p+d_M(f(xb),f(x))^p\Big).
\end{multline}
\end{lemma}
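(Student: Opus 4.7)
The plan is to mimic the argument of Lemma~\ref{lem:metric bruce} almost verbatim, with the only change being that the outer average over $B_n$ is replaced by a translation-invariance argument on $\H$. The finite-support hypothesis is used only to guarantee that all sums encountered are finite and that one can freely reindex by translation.

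First I would fix, for each $z\in B_n$, a geodesic word $s_1(z),\ldots,s_n(z)\in\{a,b,a^{-1},b^{-1},e_\H\}$ with $z=s_1(z)\cdots s_n(z)$ (padding with $e_\H$ so the length is exactly $n$), and set $w_0(z)=e_\H$ and $w_i(z)=s_1(z)\cdots s_i(z)$ for $i\in\{1,\ldots,n\}$. Then for every $y\in\H$ and $z\in B_n$, the triangle inequality in $(M,d_M)$ followed by H\"older's inequality yields
\begin{equation*}
d_M(f(yz),f(y))^p\le n^{p-1}\sum_{i=0}^{n-1} d_M\bigl(f(yw_i(z)),f(yw_i(z)s_{i+1}(z))\bigr)^p.
\end{equation*}

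Next I would sum this bound over $y\in\H$ and $z\in B_n$ and exploit the right-invariance of the counting measure on $\H$: for each fixed $z\in B_n$ and each fixed $i$, the map $y\mapsto g\eqdef yw_i(z)$ is a bijection of $\H$, so
\begin{equation*}
\sum_{y\in\H} d_M\bigl(f(yw_i(z)),f(yw_i(z)s_{i+1}(z))\bigr)^p=\sum_{g\in\H}d_M\bigl(f(g),f(gs_{i+1}(z))\bigr)^p.
\end{equation*}
If $s_{i+1}(z)=e_\H$ the summand vanishes. If $s_{i+1}(z)\in\{a^{-1},b^{-1}\}$, an additional substitution $h\eqdef gs_{i+1}(z)$ turns the summand into $\sum_{h\in\H}d_M(f(ha),f(h))^p$ or $\sum_{h\in\H}d_M(f(hb),f(h))^p$, respectively. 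In all cases each inner sum is at most
\begin{equation*}
\sum_{x\in\H}\Big(d_M(f(xa),f(x))^p+d_M(f(xb),f(x))^p\Big).
\end{equation*}

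Combining these estimates gives
\begin{equation*}
\sum_{y\in\H}\sum_{z\in B_n}d_M(f(yz),f(y))^p\le n^{p-1}\cdot|B_n|\cdot n\cdot\sum_{x\in\H}\Big(d_M(f(xa),f(x))^p+d_M(f(xb),f(x))^p\Big),
\end{equation*}
and since $|B_n|\asymp n^4$ (see~\cite{Bla03}), the factor in front is $\asymp n^{p+4}$, which is exactly~\eqref{eq:bruce}. There is no genuine obstacle here; the only mild subtlety compared to Lemma~\ref{lem:metric bruce} is that the averaging over $B_n$ used in the local version is absent, and one must instead invoke right-invariance of counting measure on $\H$ to rewrite the inner sum as a horizontal gradient summed over all of $\H$. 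Finite support of $f$ ensures that every sum in sight is a finite sum, so Fubini-style interchanges of summation are automatically justified.
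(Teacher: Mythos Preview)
Your proof is correct and follows essentially the same approach as the paper's own proof: choose length-$n$ words for each $z\in B_n$, apply the triangle inequality and H\"older, then reindex using right-invariance of counting measure on $\H$ to arrive at $n^{p-1}\cdot n\cdot|B_n|$ times the horizontal-gradient sum. The paper is slightly more terse (it does not spell out the $a^{-1},b^{-1}$ substitution or the bijection argument), but the content is identical.
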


\begin{proof}
For every $z\in B_{n}$ choose $s_1(z),\ldots,s_{n}(z)\in \{a,b,a^{-1},b^{-1},e_\H\}$ such that $z=s_1(z)\cdots s_{n}(z)$. Set $w_0(z)=e_\H$ and for  $i\in \{1,\ldots,n\}$ write  $w_i(z)=s_1(z)\cdots s_i(z)$. By the triangle inequality and H\"older's inequality, for every $y\in \H$ we have
\begin{equation*}
d_M(f(yz),f(y))^p\le n^{p-1}\sum_{i=0}^{n-1} d_M\left(f\left(yw_{i}\left(z\right)\right),f\left(yw_{i}\left(z\right)s_{i+1}\left(z\right)\right)\right)^p.
\end{equation*}
Consequently,
\begin{multline*}
\sum_{y\in \H}\sum_{z\in B_n} d_M(f(yz),f(y))^p\\
\le n^p|B_n|\sum_{x\in \H} \Big(d_M(f(xa),f(x))^p+d_M(f(xb),f(x))^p\Big).\tag*{\qedhere}
\end{multline*}
\end{proof}

\begin{proof}[Proof of Theorem~\ref{thm:pq-global}] Since $\H$ is a co-compact lattice in $\H(\R)$, there exists a compactly supported smooth function $\chi:\H(\R)\to [0,1]$ with
\begin{equation}\label{eq:partition of unity}
\forall\, h\in \H(\R),\qquad \sum_{x\in \H} \chi_x(h)=1,
\end{equation}
where $\chi_x:\H(\R)\to X$ is given by $\chi_x(h)=\chi(x^{-1}h)$ for every $x\in \H$ and $h\in \H(\R)$. Let $A\subseteq \H(\R)$ denote the support of $\chi$. We may assume without loss of generality that $A^{-1}=A$. Note that due to~\eqref{eq:partition of unity} we have
\begin{equation}\label{eq:A covers}
\bigcup_{x\in \H}xA=\H(\R).
\end{equation}
 Since $A$ is compact, we may fix  $m\in \N$ for which $A\cap \H\subseteq B_m$.

Let $f:\H\to X$ be finitely supported. Define $F:\H(\R)\to X$ by
\begin{equation}\label{eq:defF}
F(h)\eqdef \sum_{x\in \H} \chi_x(h)f(x).
\end{equation}
Fix $y\in \H$ and $h\in yA$. Note that it follows from~\eqref{eq:partition of unity} that

\begin{equation}\label{eq:gradient sum}
\sum_{x\in \H} \nabla_\H\chi_x(h)=0.
\end{equation}
Observe that if $x\in \H$ satisfies $\nabla_\H\chi_x(h)\neq 0$ then necessarily $x^{-1}h\in A$. Since $A^{-1}=A$, this implies that $x\in hA\subseteq (yA)A\subseteq yB_{2m}$. Hence,
\begin{align}\label{eq:pointwise on yA}
\nonumber&\left\|\nabla_\H F(h)\right\|_{\ell_p^2(X)}\\
\nonumber&=\left\|\sum_{x\in yB_{2m}}\big(\partial_a\chi_x(h)(f(x)-f(y)),\partial_b\chi_x(h)(f(x)-f(y))\big)\right\|_{\ell_p^2(X)}
\\\nonumber&\le \left(\max_{h\in A} \left\|\nabla_\H\chi(h)\right\|_{\ell_p^2}\right)\sum_{z\in B_{2m}}\|f(yz)-f(y)\|_X\\
&\lesssim |B_{2m}|^{1-1/p}\left(\sum_{z\in B_{2m}}\|f(yz)-f(y)\|_X^p\right)^{1/p}.
\end{align}
By integrating~\eqref{eq:pointwise on yA} over $yA$, we have
\begin{equation}\label{eq:integral over yA}
\int_{yA} \left\|\nabla_\H F(h)\right\|_{\ell_p^2(X)}^pd\mu(h)\lesssim \sum_{z\in B_{2m}}\|f(yz)-f(y)\|_X^p.
\end{equation}
By summing~\eqref{eq:integral over yA} over $y\in \H$ and recalling~\eqref{eq:A covers}, we conclude that
\begin{multline}\label{gradient control}
\left(\int_{\H(\R)} \left\|\nabla_\H F(h)\right\|_{\ell_p^2(X)}^pd\mu(h)\right)^{1/p}\lesssim \left(\sum_{y\in \H} \sum_{z\in B_{2m}}\|f(yz)-f(y)\|_X^p\right)^{1/p}\\
\lesssim \left(\sum_{x\in \H} \Big(\|f(xa)-f(x)\|_X^p+\|f(xb)-f(x)\|_X^p\Big)\right)^{1/p},
\end{multline}
where the final step of~\eqref{gradient control} uses Lemma~\ref{lem:global poincare-simple}.

Next, let $U\subseteq \H(\R)$ be a bounded open set such that $e_\H\in U$ yet $U\cap (xU)=\emptyset$ for all $x\in \H\setminus \{e_\H\}$.  Since $c^{[0,1]}=\{c^s:s\in [0,1]\}$ and $U$ are bounded subsets of $\H(\R)$, we can choose $r\in \N$ such that $(Uc^{[0,1]}A)\cap \H\subseteq B_r$. Fix $x\in \H$ and $h\in xU$. Suppose that $k\in \N$ and $t\in [k,k+1]$. Recalling~\eqref{eq:partition of unity} and~\eqref{eq:defF} we have
\begin{equation}\label{eq:error with center}
F(hc^t)-f(xc^k)=\sum_{w\in \H} \chi_w(hc^t)(f(w)-f(xc^k)).
\end{equation}
Observe that if $w\in \H$ satisfies $\chi_w(hc^t)\neq 0$ then $hc^t\in A$, and since $A=A^{-1}$ and $c$ belongs to the center of $\H(\R)$, this inclusion implies that
$w\in hc^tA\subseteq xc^k Uc^{[0,1]}A\subseteq xc^kB_r$. \eqref{eq:error with center} therefore implies that

\begin{equation}\label{eq:ftoF1}
\|F(hc^t)-f(xc^k)\|_X^p\lesssim \sum_{z\in B_r} \|f(xc^kz)-f(xc^k)\|_X^p.
\end{equation}
Similar (simpler) reasoning shows that also
\begin{equation}\label{eq:ftoF2}
\|F(h)-f(x)\|_X^p\lesssim \sum_{z\in B_r} \|f(xz)-f(x)\|_X^p.
\end{equation}
It follows from~\eqref{eq:ftoF1} and~\eqref{eq:ftoF2} that
\begin{multline}\label{eq:for integration over xU}
\|f(xc^k)-f(x)\|_X^p\lesssim \|F(hc^t)-F(h)\|_X^p\\+\sum_{z\in B_r}\Big(\|f(xc^kz)-f(xc^k)\|_X^p+\|f(xz)-f(x)\|_X^p\Big).
\end{multline}
Integration of~\eqref{eq:for integration over xU} over $h\in xU$ therefore yields the estimate
 \begin{multline}\label{eq:integrated over xU}
\|f(xc^k)-f(x)\|_X^p\lesssim \int_{xU}\|F(hc^t)-F(h)\|_X^pd\mu(h)\\+\sum_{z\in B_r}\Big(\|f(xc^kz)-f(xc^k)\|_X^p+\|f(xz)-f(x)\|_X^p\Big).
\end{multline}
Since the sets $\{xU\}_{x\in \H}$ are pairwise disjoint, summation of~\eqref{eq:integrated over xU} over $x\in \H$, combined with an application of Lemma~\ref{lem:global poincare-simple}, shows that
\begin{multline}\label{eq:before t integration}
\left(\sum_{x\in \H}\|f(xc^k)-f(x)\|_X^p\right)^{1/p}\lesssim \left(\int_{\H(\R)}\|F(hc^t)-F(h)\|_X^pd\mu(h)\right)^{1/p}\\+\left(\sum_{x\in \H} \Big(\|f(xa)-f(x)\|_X^p+\|f(xb)-f(x)\|_X^p\Big)\right)^{1/p}.
\end{multline}
Integration of~\eqref{eq:before t integration} over $t\in [k,k+1]$ yields
\begin{align}\label{eq:t integrated}
&\nonumber\frac{1}{k^{1+q/2}}\left(\sum_{x\in \H}\|f(xc^k)-f(x)\|_X^p\right)^{q/p}\\&\le \nonumber C^q\int_k^{k+1}\left(\int_{\H(\R)}\|F(hc^t)-F(h)\|_X^pd\mu(h)\right)^{q/p}\frac{dt}{t^{1+q/2}}\\&\quad+\frac{C^q}{k^{1+q/2}}\left(\sum_{x\in \H} \Big(\|f(xa)-f(x)\|_X^p+\|f(xb)-f(x)\|_X^p\Big)\right)^{q/p},
\end{align}
where $C\in (0,\infty)$ is a universal constant. We may now sum~\eqref{eq:t integrated} over $k\in \N$ to get the bound

\begin{align}\label{eq:vertical control}
&\nonumber\left(\sum_{k=1}^\infty\frac{1}{k^{1+q/2}}\left(\sum_{x\in \H}\|f(xc^k)-f(x)\|_X^p\right)^{q/p}\right)^{1/q}\\&\lesssim \nonumber\left(\int_0^{\infty}\left(\int_{\H(\R)}\|F(hc^t)-F(h)\|_X^pd\mu(h)\right)^{q/p}\frac{dt}{t^{1+q/2}}\right)^{1/q}\\&\quad+
\left(\sum_{x\in \H} \Big(\|f(xa)-f(x)\|_X^p+\|f(xb)-f(x)\|_X^p\Big)\right)^{1/p}.
\end{align}
The desired inequality~\eqref{eq:global main} follows from an application of Theorem~\ref{thm:real} to $F$, and substituting~\eqref{gradient control} and~\eqref{eq:vertical control} into the resulting inequality.
\end{proof}

\section{Vertical perimeter versus horizontal perimeter}\label{sec:conj}
The case $X=\R$ and $q=2$ of Theorem~\ref{thm:real} shows that for every $p\in (1,2]$ and every smooth and compactly supported $f:\H(\R)\to \R$,
\begin{multline}\label{eq:toR}
\left(\int_0^\infty \left(\int_{\H(\R)}|f(hc^t)-f(h)|^pd\mu(h)\right)^{2/p}\frac{dt}{t^{2}}\right)^{1/2}\\\lesssim_{p} \left(\int_{\H(\R)} \left\|\nabla_\H f(h)\right\|_{\ell_p^2}^p d\mu(h)\right)^{1/p}.
\end{multline}
The implied constant in~\eqref{eq:toR} that follows from our proof of Theorem~\ref{thm:real} tends to $\infty$ as $p\to 1$. However, we ask whether the endpoint case $p=1$ of~\eqref{eq:toR} does nevertheless hold true.
\begin{question}\label{Q:strongest}
Is it true that every smooth and compactly supported $f:\H(\R)\to \R$  satisfies
\begin{multline}\label{eq:p=1}
\left(\int_0^\infty \left(\int_{\H(\R)}|f(hc^t)-f(h)|d\mu(h)\right)^{2}\frac{dt}{t^{2}}\right)^{1/2}\\\lesssim \int_{\H(\R)} \left\|\nabla_\H f(h)\right\|_{\ell_1^2} d\mu(h).
\end{multline}
\end{question}
A standard application of the co-area formula shows that it suffices to prove~\eqref{eq:p=1} when $f$ is an indicator of a measurable set $A\subseteq \H(\R)$. For such a choice of $f$ the right hand side of~\eqref{eq:p=1} should be interpreted as the horizontal perimeter of $A$, denoted $\mathrm{PER}(A)$. Rather than defining the horizontal perimeter $\mathrm{PER}(A)$ here, we refer to~\cite{Amb01} and~\cite[Sec.~2]{CK10} for a detailed discussion of this notion.

\begin{definition}[Vertical perimeter at scale $t$]\label{def vert}
Let $A\subseteq \H(\R)$ be measurable and $t\in (0,\infty)$. Recalling that $\mu$ is the Haar measure on $\H(\R)$ (equivalently $\mu$ is the Lebesgue measure on $\R^3$), define the vertical perimeter of $A$ at scale $t$, denoted $v_t(A)$, to be the quantity
\begin{equation}\label{eq:def Vt}
v_t(A)\eqdef \mu\left(\left\{h\in A:\ hc^t\notin A\quad\mathrm{or}\quad hc^{-t}\notin A\right\}\right).
\end{equation}
\end{definition}
Thus $v_t(A)$ measures the the size of those points of $A$ from which a vertical movement of $\pm t$ lands outside $A$. Using this terminology, we  have the following reformulation of Question~\ref{Q:strongest} in terms of an isoperimetric-type inequality.
\begin{question}\label{Q:isoperimetric}
Is it true that for every measurable $A\subseteq \H(\R)$ one has
\begin{equation}\label{eq:con isoperimetric}
\int_0^\infty \frac{v_t(A)^2}{t^2}dt\lesssim \mathrm{PER}(A)^2.
\end{equation}
\end{question}
While we believe that Question~\ref{Q:isoperimetric} has a positive answer, at present we do not have sufficient evidence that would justify formulating this assertion as a conjecture. However, there is a weaker coarse variant of Question~\ref{Q:isoperimetric} for which there is significant partial positive evidence that will be published elsewhere; this evidence originates from ongoing work (including numerical experiments and proofs of nontrivial special cases) on our question by Artem Kozhevnikov and Pierre Pansu (personal communication). We shall now formulate this weaker conjecture, and proceed to explain an  application of it to theoretical computer science.

\begin{definition}[Coarse total vertical perimeter at resolution $\e$] Let $A\subseteq \H(\R)$ be measurable and $\e\in (0,1)$. Define the coarse total vertical perimeter of $A$ at resolution $\e$ by
\begin{equation}\label{def coarse}
V^{(\e)}(A)\eqdef \int_\e^1 \frac{v_t(A)}{t^{3/2}}dt.
\end{equation}
\end{definition}
The following isoperimetric-type conjecture relates  the coarse total vertical perimeter of $A$ at resolution $\e$ with its horizontal perimeter.
\begin{conjecture}\label{total conjecture}
For every measurable $A\subseteq \H(\R)$ and every $\e\in (0,1/2)$,
\begin{equation}\label{eq:total conjecture}
V^{(\e)}(A)\lesssim \sqrt{\log(1/\e)}\cdot \mathrm{PER}(A).
\end{equation}
\end{conjecture}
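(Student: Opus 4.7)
My plan is to combine the vector-valued Poincar\'e inequality of Theorem~\ref{thm:real} with a mollification and a H\"older interpolation, reducing Conjecture~\ref{total conjecture} to a sharp quantitative dependence of the constant in Theorem~\ref{thm:real} on $p\to 1^+$. By the coarea formula it suffices to prove~\eqref{eq:total conjecture} when $A$ has smooth horizontal boundary, so I work with $f=\mathbf{1}_A$. For a parameter $\delta\in(0,1)$ I would set $f_\delta=P^{\H}_{\delta^2}\mathbf{1}_A$, where $P^{\H}_s$ denotes the horizontal heat semigroup on $\H(\R)$; sub-Riemannian BV theory (see~\cite{Amb01}) yields $\int\|\nabla_\H f_\delta\|_{\ell_1^2}d\mu\lesssim \mathrm{PER}(A)$, while concentration of $|\nabla_\H f_\delta|$ in a horizontal tube of width $\delta$ about $\partial A$ gives $\int\|\nabla_\H f_\delta\|_{\ell_p^2}^p d\mu\lesssim \delta^{1-p}\mathrm{PER}(A)$ for $p\in(1,2]$. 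On the vertical side, for $t\ge\delta^2$ the mollification is essentially harmless and a direct inspection shows $\int|f_\delta(hc^t)-f_\delta(h)|^p\,d\mu\gtrsim v_t(A)$.

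For a small parameter $\sigma\in(0,1/2)$ to be chosen, I would then apply Theorem~\ref{thm:real} with $X=\R$, $q=2$, $p=1+\sigma$, and $\delta=\sqrt{\e}$. Letting $C_\sigma$ denote the resulting implied constant, this yields
$$
\left(\int_{\e}^{\infty}v_t(A)^{2/(1+\sigma)}\,\frac{dt}{t^2}\right)^{1/2}\lesssim C_\sigma\,\e^{-\sigma/(2(1+\sigma))}\,\mathrm{PER}(A)^{1/(1+\sigma)}.
$$
To pass from this $L^{2/(1+\sigma)}$-in-$t$ bound to the $L^1$-in-$t$ quantity $V^{(\e)}(A)=\int_{\e}^{1}v_t(A)\,t^{-3/2}\,dt$, I would apply H\"older's inequality with exponents $\tfrac{2}{1+\sigma}$ and $\tfrac{2}{1-\sigma}$, writing $v_t\,t^{-3/2}=(v_t^{2/(1+\sigma)}t^{-2})^{(1+\sigma)/2}\cdot t^{\sigma-1/2}$. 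A direct computation shows that with the balance $\sigma\asymp 1/\log(1/\e)$, the residual $t$-integral $\int_\e^1 t^{(2\sigma-1)/(1-\sigma)}\,dt$ is of order $\log(1/\e)$, so its $(1-\sigma)/2$-th power contributes precisely the target factor $\sqrt{\log(1/\e)}$; in the same regime $\e^{-\sigma/2}$ stays bounded and $C_\sigma^{1+\sigma}\sim C_\sigma$, leaving
$$
V^{(\e)}(A)\lesssim C_\sigma\,\sqrt{\log(1/\e)}\,\mathrm{PER}(A).
$$

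This reduces Conjecture~\ref{total conjecture} to showing that $C_\sigma$ grows at most polylogarithmically in $1/\sigma$ as $\sigma\to 0^+$. The hard part will be exactly this control. The proof of Theorem~\ref{thm:real} invokes the Martinez--Torrea--Xu inequality of~\cite{MTX06} as a black box, and for the relevant target space $L_p(\R^2,\ell_p^2)$ (which is of martingale cotype $2$ for $p\in(1,2]$ by Figiel's theorem) the classical $g$-function theory only produces $C_\sigma\gtrsim \sigma^{-1/2}$, via the weak-type $(1,1)$ endpoint of Littlewood--Paley. Substituted above, this yields only $V^{(\e)}(A)\lesssim \log(1/\e)\,\mathrm{PER}(A)$, matching but not improving the crude bound obtained from $v_t(A)\lesssim\sqrt{t}\,\mathrm{PER}(A)$ via the horizontal commutator path of length $O(\sqrt{t})$ joining $e_\H$ to $c^t$. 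To gain the missing $\sqrt{\log(1/\e)}$ factor, one would therefore need a genuine endpoint strengthening of Theorem~\ref{thm:real} at $p=1$---for instance an $H^1$--$\mathrm{BMO}$ type improvement of Martinez--Torrea--Xu adapted to sub-Riemannian BV data, or a Chang--Wilson--Wolff style square function exploiting orthogonality among dyadic vertical scales of $\partial A$. Such an input appears to lie outside the method developed in this paper and constitutes the principal obstacle, consistent with Conjecture~\ref{total conjecture} being left open.
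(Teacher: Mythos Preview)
The statement you are addressing is a \emph{conjecture}: the paper does not prove it. The only argument the paper gives toward Conjecture~\ref{total conjecture} is conditional---it observes (see~\eqref{eq:CS}) that a positive answer to Question~\ref{Q:isoperimetric} would imply~\eqref{eq:total conjecture} via Cauchy--Schwarz, and it records that partial evidence for the weaker Conjecture~\ref{total conjecture} exists in unpublished work of Kozhevnikov, Pansu, and Young. There is thus no ``paper's own proof'' to compare against.

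Your proposal is an honest extrapolation strategy: push Theorem~\ref{thm:real} to $p=1+\sigma$, mollify by the horizontal heat flow to apply it to indicator data, H\"older-interpolate back to the $L^1$-in-$t$ quantity $V^{(\e)}(A)$, and optimize $\sigma\asymp 1/\log(1/\e)$. Your H\"older bookkeeping is correct, and you arrive at exactly the right obstruction: the argument yields~\eqref{eq:total conjecture} if and only if the constant $C_\sigma$ in Theorem~\ref{thm:real} stays bounded (or polylogarithmic) as $\sigma\to 0^+$, whereas the Martinez--Torrea--Xu input only gives $C_\sigma\gtrsim \sigma^{-1/2}$, which collapses your bound to the trivial $V^{(\e)}(A)\lesssim \log(1/\e)\cdot\mathrm{PER}(A)$. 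You correctly diagnose that closing this gap would require a genuine $p=1$ endpoint strengthening (an $H^1$--$\mathrm{BMO}$ or square-function improvement) that the paper does not provide. So your proposal is not a proof but a clear reduction that recovers, by a different route, why the conjecture remains open; the paper's own discussion instead reduces Conjecture~\ref{total conjecture} to the stronger scale-by-scale inequality~\eqref{eq:con isoperimetric}. A minor caution: your intermediate heuristics (the tube estimate $\int\|\nabla_\H f_\delta\|_{\ell_p^2}^p\,d\mu\lesssim \delta^{1-p}\mathrm{PER}(A)$ and the lower bound $\int|f_\delta(hc^t)-f_\delta(h)|^{p}\,d\mu\gtrsim v_t(A)$ for $t\ge\delta^2$) are plausible but would need genuine proof---sub-Riemannian heat kernels do not localize as cleanly as Euclidean ones, and the second claim requires controlling how mollification smears the vertical boundary---though these are secondary to the $C_\sigma$ blow-up you already identified.
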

Note that if~\eqref{eq:con isoperimetric} holds true then by the Cauchy--Schwarz inequality,
\begin{multline}\label{eq:CS}
V^{(\e)}(A)\stackrel{\eqref{def coarse}}{=}\int_\e^1 \frac{1}{\sqrt{t}}\cdot \frac{v_t(A)}{t}dt\\\le \sqrt{\int_\e^1\frac{dt}{t}}\cdot \sqrt{\int_\e^1 \frac{v_t(A)^2}{t^2}dt}\stackrel{\eqref{eq:con isoperimetric}}{\lesssim} \sqrt{\log(1/\e)}\cdot \mathrm{PER}(A).
\end{multline}
This shows that a positive answer to Question~\ref{Q:isoperimetric} would imply a positive resolution of Conjecture~\ref{total conjecture}.

By the co-area formula, Conjecture~\ref{total conjecture} has the following equivalent functional version: for every smooth and compactly supported $f:\H(\R)\to \R$ and every $\e\in (0,1/2)$,
$$
\int_\e^1\int_{\H(\R)}\frac{|f(hc^t)-f(h)|}{t^{3/2}}d\mu(h) dt \lesssim \sqrt{\log (1/\e)}\int_{\H(\R)}\|\nabla_\H f(h)\|_{\ell_1^2} d\mu(h).
$$
By re-scaling, this is equivalent to requiring that for every $R\in (1,\infty)$,
\begin{multline}\label{eq:continuous log R}
\int_1^R\int_{\H(\R)}\frac{|f(hc^t)-f(h)|}{t^{3/2}}d\mu(h) dt\\ \lesssim \sqrt{\log R}\int_{\H(\R)}\|\nabla_\H f(h)\|_{\ell_1^2} d\mu(h).
\end{multline}
Arguing as in Section~\ref{sec:proof main}, one sees that~\eqref{eq:continuous log R} implies that for every integer $n\ge 2$ and every $f:\H\to \R$ we have
\begin{multline}\label{eq:log conjecture discrete}
\sum_{k=1}^{n^2}\sum_{x\in B_n}\frac{ |f(xc^k)-f(x)|}{k^{3/2}}\\\lesssim
\sqrt{\log n}\sum_{x\in B_{21n}} \Big(|f(xa)-f(x)|+|f(xb)-f(x)|\Big).
\end{multline}
By summing~\eqref{eq:log conjecture discrete} over coordinates we conclude that the conjectural inequality~\eqref{eq:log conjecture discrete} implies that every $f:\H\to \ell_1$ satisfies
\begin{multline}\label{eq:conjecture ell1 version}
\sum_{k=1}^{n^2}\sum_{x\in B_n}\frac{ \|f(xc^k)-f(x)\|_1}{k^{3/2}}\\\lesssim
\sqrt{\log n}\sum_{x\in B_{21n}} \Big(\|f(xa)-f(x)\|_1+\|f(xb)-f(x)\|_1\Big).
\end{multline}
The same computation as in~\eqref{eq:distortion computation} shows that if~\eqref{eq:conjecture ell1 version} does indeed hold true, then $c_1(B_n,d_W)\gtrsim \sqrt{\log n}$, and therefore, as explained in the introduction, in fact  $c_1(B_n,d_W)\asymp \sqrt{\log n}$. We state this conclusion as a separate conjecture.

\begin{conjecture}\label{conj:l1 distortion}
For every integer $n\ge 2$ we have $c_1(B_n,d_W)\asymp \sqrt{\log n}$.
\end{conjecture}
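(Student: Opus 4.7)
\medskip

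\textbf{Proof plan.} The upper bound $c_1(B_n,d_W)\lesssim \sqrt{\log n}$ is already available: as recalled in the introduction, for $X=\ell_2$ the explicit mapping of~\cite{LN06} with parameter $\e=1/\log n$ gives $c_2(B_n,d_W)\lesssim \sqrt{\log n}$, and since $\ell_2$ embeds isometrically (up to constants) into $L_1$ (via i.i.d.\ Gaussians), this passes to $\ell_1$. Thus the whole content of the conjecture is the matching lower bound $c_1(B_n,d_W)\gtrsim \sqrt{\log n}$. Our plan is to follow precisely the chain of implications already laid out in Section~\ref{sec:conj}: establish the vertical-versus-horizontal isoperimetric inequality (i.e.\ resolve Question~\ref{Q:isoperimetric}, or at least prove Conjecture~\ref{total conjecture}), deduce the inequality~\eqref{eq:conjecture ell1 version} for $\ell_1$-valued functions, and then feed a putative low-distortion embedding into it.

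\textbf{Step 1 (the hard step): isoperimetry.} We would try to prove~\eqref{eq:con isoperimetric}, i.e.\ $\int_0^\infty v_t(A)^2 t^{-2}\,dt \lesssim \mathrm{PER}(A)^2$ for every measurable $A\subseteq \H(\R)$. Using the coarea formula, it suffices to treat indicators of sets with finite horizontal perimeter. Natural candidates for the method are: (i) a Semmes-style decomposition of the boundary into approximately ruled intrinsic Lipschitz graphs (in the sense of Franchi--Serapioni--Serra Cassano) and control of how long a vertical line can stay inside such a graph; (ii) a direct $L^2(dt/t)$ Littlewood--Paley-type estimate for the vertical differencing semigroup $\tau_t\1_A-\1_A$ on $L^1(\H(\R))$, using that the generator of the vertical flow is subordinate to the horizontal sub-Laplacian; (iii) a kinematic/slicing argument in the $(a,b)$-plane coupled with the identity $[a,b]=c$, making the commutator explicit. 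If~\eqref{eq:con isoperimetric} proves elusive, the fallback is the weaker Conjecture~\ref{total conjecture}, where one only needs $V^{(\e)}(A)\lesssim \sqrt{\log(1/\e)}\,\mathrm{PER}(A)$; this can potentially be attacked through a multi-scale stopping-time decomposition of the boundary, losing only a square-root logarithm from the scales where the boundary ``turns vertical''.

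\textbf{Step 2: from isoperimetry to the Poincar\'e inequality.} By coarea, Conjecture~\ref{total conjecture} is equivalent to the functional version~\eqref{eq:continuous log R} for smooth compactly supported $f:\H(\R)\to\R$. The discretization argument of Section~\ref{sec:proof main}, which is indifferent to the particular exponents involved, transports~\eqref{eq:continuous log R} to the discrete inequality~\eqref{eq:log conjecture discrete} for $f:\H\to\R$ with the same $\sqrt{\log n}$ loss (one constructs $F=\sum_x \chi_x f(x)$ from a $\H$-periodic partition of unity, applies the continuous inequality to $F$, and uses Lemma~\ref{lem:global poincare-simple} to match the boundary terms back to $\H$). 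Summing coordinatewise, one obtains~\eqref{eq:conjecture ell1 version} for $f:\H\to\ell_1$.

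\textbf{Step 3: distortion.} Suppose $f:B_{22n}\to\ell_1$ satisfies $d_W(x,y)\le\|f(x)-f(y)\|_1\le D\,d_W(x,y)$. Plug $f$ into~\eqref{eq:conjecture ell1 version}. Using $d_W(c^k,e_\H)\asymp\sqrt{k}$ and $|B_n|\asymp n^4$, the left-hand side is at least $\sum_{k=1}^{n^2} n^4\sqrt{k}/k^{3/2}\asymp n^4\log n$, while the right-hand side is at most a constant multiple of $\sqrt{\log n}\cdot n^4 D$. Hence $D\gtrsim\sqrt{\log n}$, completing the lower bound and, combined with Step~0, the conjecture.

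\textbf{Where the difficulty lies.} Steps~2 and~3 are essentially routine given the technology already developed in the paper; the entire burden of the conjecture is concentrated in Step~1. The $L^2(dt/t)$ vertical square-function control of characteristic functions on $\H$ in terms of horizontal perimeter has no obvious analogue in the Euclidean literature, because the vector fields $\partial_a,\partial_b$ do not commute with vertical translation in any $L^1$-friendly way. This is precisely why our $L^q$ proof of Theorem~\ref{thm:real} (which relies on martingale cotype and the Martinez--Torrea--Xu $g$-function inequality) breaks down at $p=1$: $L^1$ has no martingale cotype. Circumventing this through genuinely geometric, rather than functional-analytic, arguments is the main obstacle.
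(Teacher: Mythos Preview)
The statement you are addressing is a \emph{conjecture} in the paper, not a theorem; the paper offers no proof of it. Your proposal correctly reproduces the chain of implications laid out in Section~\ref{sec:conj}: Question~\ref{Q:isoperimetric} $\Rightarrow$ Conjecture~\ref{total conjecture} $\Rightarrow$ inequality~\eqref{eq:conjecture ell1 version} $\Rightarrow$ the lower bound $c_1(B_n,d_W)\gtrsim\sqrt{\log n}$. Steps~2 and~3 are, as you say, routine and are already carried out in the paper. But your Step~1 is not a proof: you list three possible lines of attack (intrinsic Lipschitz graph decompositions, an $L^1$ Littlewood--Paley estimate, a kinematic slicing argument) without executing any of them, and you yourself note that the Martinez--Torrea--Xu machinery breaks down precisely here because $L_1$ has no martingale cotype.

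So the gap is exactly the one the paper identifies as open: the isoperimetric inequality~\eqref{eq:con isoperimetric}, or even its weaker form~\eqref{eq:total conjecture}, is unproved, and your proposal does not close it. What you have written is an accurate summary of the reduction already in the paper together with a wish-list of techniques, not a proof. To turn this into an actual argument you would need to carry out one of the three approaches in Step~1 (or find a fourth), and none of them is known to work; indeed the paper explicitly refrains from calling Question~\ref{Q:isoperimetric} a conjecture for lack of sufficient evidence.
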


 To summarize the above reasoning, a positive answer to Question~\ref{Q:strongest} (or equivalently Question~\ref{Q:isoperimetric}) implies the positive resolution of Conjecture~\ref{total conjecture}, which in turn implies the positive resolution of Conjecture~\ref{conj:l1 distortion}.

We recall that in~\cite{ckn} it was shown that $c_1(B_n,d_W)\gtrsim (\log n)^\kappa$ for some universal constant $\kappa>0$. While the proof of this result in~\cite{ckn} is constructive, to the best of our knowledge there was no serious attempt to use that proof in order to give a good estimate on the value of $\kappa$, since it seems very unlikely that the methods of~\cite{ckn} can yield the sharp bound $\kappa\ge \frac12$. Suppose that for some $p\in [1,\infty)$ the following variant of~\eqref{eq:con isoperimetric} holds true for every measurable $A\subseteq \H(\R)$.
\begin{equation}\label{eq:p variant}
\left(\int_0^\infty \frac{v_t(A)^p}{t^{1+p/2}}dt\right)^{1/p}\lesssim \mathrm{PER}(A).
\end{equation}
Arguing as in~\eqref{eq:CS}, an application of H\"older's inequality would yield the validity of~\eqref{eq:total conjecture} with the term $\sqrt{\log(1/\e)}$ replaced by $(\log(1/\e))^{1-1/p}$. Reasoning identically to the discussion preceding Conjecture~\ref{conj:l1 distortion}, it would follow that $c_1(B_n,d_W)\gtrsim (\log n)^{1/p}$. Thus, proving an inequality such as~\eqref{eq:p variant} would yield a new proof of the lower bound on $c_1(B_n,d_W)$ of~\cite{ckn}. It is straightforward to check that~\eqref{eq:p variant} holds true when $p=\infty$, and it would be interesting to investigate whether the method of~\cite{ckn} can potentially lead to a proof of~\eqref{eq:p variant} for some finite $p$.

 Due to the fact that our conjectures lead to an asymptotically sharp lower bound on the $\ell_1$ distortion of balls in $\H$, it follows that the smallest possible $p\in [1,\infty)$ for which~\eqref{eq:p variant} can hold true is $p=2$. Correspondingly, \eqref{eq:total conjecture} cannot be improved. The three known embeddings of $(B_n,d_W)$ into $\ell_1$ with distortion $O(\sqrt{\log n})$ that were mentioned in the introduction (first embed into $\ell_2$, and then use the fact that finite subsets of $\ell_2$ embed isometrically into $\ell_1$)  lead to examples of sets $A\subseteq \H(\R)$ for which~\eqref{eq:total conjecture} is sharp. A different explicit example of a set for which~\eqref{eq:total conjecture} is sharp was found by Robert Young (personal communication). This illuminating example will be published elsewhere.

 \subsection{Algorithmic implications}
 We shall now indicate an important consequence of the conjectures presented here to theoretical computer science. This topic is discussed at length in e.g.~\cite{ckn,Naor10}, so we will be  brief here, and in particular we will mention only the best known results without describing the historical development.

 In the Sparsest Cut problem one is given as input two symmetric functions $C,D:\{1,\ldots,n\}\times \{1,\ldots,n\}\to [0,\infty)$ and the goal is to compute (or estimate) in polynomial time the quantity

 \begin{equation*}
\Phi^*(C,D)\eqdef \min_{\emptyset \neq S\subsetneq \{1,\ldots,n\}}
\frac{\sum_{i=1}^n\sum_{j=1}^nC(i,j)\left|\1_S(i)-\1_S(j)\right|}{\sum_{i=1}^n\sum_{j=1}^nD(i,j)\left|\1_S(i)-\1_S(j)\right|}\,  .
\end{equation*}
 This versatile optimization problem is of central algorithmic importance; see e.g.~\cite{Shmoys95} for examples of its applicability.

  The best known approximation algorithm for the Sparsest Cut problem was proposed by Goemans and Linial~\cite{Goe97,Lin02}. In~\cite{ALN08} it is proved that the Goemans-Linial algorithm outputs a number which is guaranteed to be within a factor of $O(\sqrt{\log n}\log\log n)$ of $\Phi^*(C,D)$. Goemans~\cite{Goe97} and Linial~\cite{Lin02} (see also~\cite[pages~379--380]{Mat-Discrete-Geometry}) actually conjectured  that their algorithm outputs a number which is within a factor of $O(1)$ of $\Phi^*(C,D)$. The Goemans-Linial conjecture was disproved by Khot and Vishnoi~\cite{KV04}, who proved that the Goemans-Linial algorithm makes an error of at least a constant power of $\log\log n$ on some inputs. A link of the Sparsest Cut problem to the Heisenberg group was found in~\cite{LN06}, where the Goemans-Linial algorithm was shown to make an error of at least a constant multiple of $c_1(B_n,d_W)$ on some inputs. The lower bound $c_1(B_n,d_W)\gtrsim (\log n)^\kappa$ of~\cite{ckn} consequently established that the Goemans-Linial algorithm makes an error of at least a constant multiple of $(\log n)^\kappa$ on some inputs. An affirmative solution of Conjecture~\ref{conj:l1 distortion} would improve this lower bound to a constant multiple of $\sqrt{\log n}$, thus resolving  (up to iterated logarithms) the problem of understanding the asymptotic performance of the Goemans-Linial algorithm.

\subsection*{Acknowledgements} We are grateful to Artem Kozhevnikov, Pierre Pansu and Robert Young for enlightening discussions on the questions presented in Section~\ref{sec:conj}, and for sharing  their ongoing work on this topic.

\bibliographystyle{alphaabbrvprelim}
\bibliography{besov}

\end{document}